\newtheorem{theorem}{Theorem}[section]
\newtheorem{lemma}[theorem]{Lemma}
\newtheorem{proposition}[theorem]{Proposition}
\newtheorem{corollary}[theorem]{Corollary}
\newtheorem*{maintheorem1}{Theorem A}
\newtheorem*{maintheorem2}{Theorem B}
\newtheorem*{corollaryC}{Corollary C}
\theoremstyle{remark}
\newcommand{\C}{\ensuremath{\mathbb{C}}}
\newcommand{\R}{\ensuremath{\mathbb{R}}}
\newcommand{\SO}{\ensuremath{\mathsf{SO}}}
\newcommand{\SU}{\ensuremath{\mathsf{SU}}}
\newcommand{\U}{\ensuremath{\mathsf{U}}}
\newcommand{\g}[1]{\ensuremath{\mathfrak{#1}}}
\newcommand{\II}{\ensuremath{I\!I}}
\newcommand{\mean}{\ensuremath{\mathcal{H}}}
\DeclareMathOperator{\Ad}{Ad}
\DeclareMathOperator{\ad}{ad}
\DeclareMathOperator{\Exp}{Exp}
\DeclareMathOperator{\Id}{Id}
\DeclareMathOperator{\spann}{span}
\begin{document}

\title[Homogeneous CR submanifolds of complex hyperbolic spaces]
{Homogeneous CR submanifolds\\of complex hyperbolic spaces}

\author[J.~C.~D\'{\i}az-Ramos]{Jos\'{e} Carlos D\'{\i}az-Ramos}
\address{Department of Mathematics, University of Santiago de Compostela, Spain.}
\email{josecarlos.diaz@usc.es}
\author[M.~Dom\'{\i}nguez-V\'{a}zquez]{Miguel Dom\'{\i}nguez-V\'{a}zquez}
\address{Department of Mathematics, University of Santiago de Compostela, Spain.}
\email{miguel.dominguez@usc.es}
\author[O.~P\'erez-Barral]{Olga P\'erez-Barral}
\address{Department of Mathematics, University of Santiago de Compostela, Spain.}
\email{olgaperez.barral@usc.es}

\thanks{The authors have been supported by projects MTM2016-75897-P, PID2019-105138GB-C21 (AEI/FEDER,
Spain) and ED431C 2019/10, ED431F 2020/04 (Xunta de Galicia, Spain). The second author acknowledges support of the
Ram\'{o}n y Cajal program of the Spanish State Research Agency.}

\subjclass[2010]{53C35, 57S20, 53C55}


\begin{abstract}
We classify homogeneous CR submanifolds in complex hyperbolic spaces arising as orbits of a subgroup of the solvable part of the Iwasawa decomposition of the isometry group of the ambient space.
\end{abstract}

\keywords{Complex hyperbolic space, CR submanifold, homogeneous submanifold}

\maketitle

\section{Introduction}
In the setting of complex analysis, a submanifold of a K\"ahler manifold is said to be CR if the maximal holomorphic subspaces of all tangent spaces define a smooth distribution. Bejancu~\cite{bejancu} introduced a stronger notion of CR submanifold of a K\"ahler manifold by requiring the complementary distribution to the maximal complex distribution in the tangent bundle to be totally real; thus, complex and totally real submanifolds are special examples of CR submanifolds. These two definitions of CR submanifold coincide under the assumption that the complementary distribution has real dimension one. CR submanifolds satisfying this condition are said to be of hypersurface type and they play an important role in the context of complex analysis and boundary value problems. 

In submanifold geometry, an interesting problem is to classify homogeneous CR submanifolds, according to Bejancu's definition, in certain important families of K\"ahler manifolds  such as Hermitian symmetric spaces or, more particularly, complex space forms, that is, complex Euclidean spaces $\mathbb{C}^{n}$, complex projective spaces $\mathbb{C}P^{n}$, and complex hyperbolic spaces $\mathbb{C}H^{n}$. 
The relevance of homogeneous CR submanifolds in this setting stems from the fact that they include several examples of interest in the context of symmetric spaces. 

Real hypersurfaces, that is, submanifolds of real codimension one, constitute an important subclass of CR submanifolds that has been thoroughly studied by many authors. Indeed, the classification of homogeneous real hypersurfaces in $\mathbb{C}^{n}$ follows from Segre's classical work on isoparametric hypersurfaces, whereas for $\mathbb{C}P^{n}$ this is due to Takagi~\cite{takagi}. The classification of homogeneous hypersurfaces in complex hyperbolic spaces is more involved, although it has successfully been solved by Berndt and Tamaru~\cite{BT13}.

Another subclass of homogeneous CR submanifolds is that of homogeneous K\"ahler ones. The corresponding classifications in complex space forms have been achieved by several authors. While in complex $n$-dimensional Euclidean and hyperbolic spaces the only examples are totally geodesic $\mathbb{C}^{k}$ and $\mathbb{C}H^{k}$, with $k<n$, respectively, as proved by Di Scala, Ishi and Loi~\cite{discala}, the classification of compact homogeneous K\"ahler submanifolds of $\C P^n$, obtained by Takeuchi~\cite{takeuchi}, includes more examples.

Lagrangian submanifolds, that is, totally real submanifolds of maximal dimension, constitute another important class of CR submanifolds. The classification of homogeneous Lagrangian submanifolds in complex space forms is still an open problem. However, some partial results have been achieved. 
For example, Bedulli and Gori~\cite{BG} obtained the classification of homogeneous Lagrangian submanifolds in $\C P^n$ induced by the action of a simple compact subgroup of $\SU(n+1)$, whereas little is known in the non-simple case. 
Under additional assumptions, such as the parallelity of the second fundamental form, some results have also been derived; see~\cite{ohnita} for a survey.
However, the classification of homogeneous Lagrangian submanifolds in complex hyperbolic spaces $\C H^n$ has been shown to be a rather complicated problem, mainly due to the non-compactness of its isometry group. Hashinaga and Kajigaya obtained some partial results in~\cite{hashi}. In particular, they derived a classification result of homogeneous Lagrangian submanifolds that arise as orbits of a subgroup of the solvable part of the Iwasawa decomposition of the isometry group of $\mathbb{C}H^{n}$.

Other interesting examples of homogeneous CR submanifolds in Hermitian symmetric spaces arise as principal orbits of polar and coisotropic actions. An isometric action on a Riemannian manifold is said to be polar if there is a submanifold that intersects all the orbits of the action and every such intersection is orthogonal. An isometric action on a Hermitian manifold is called coisotropic if the normal spaces to its principal orbits are totally real. Thus, every principal orbit of a coisotropic action is a CR submanifold. Polar actions on irreducible compact homogeneous K\"{a}hler manifolds are known to be coisotropic~\cite{PT02}, so they produce examples of CR submanifolds. 
In non-compact Hermitian symmetric spaces, deciding whether polar actions are coisotropic is still an open problem. However, this is known to be true in complex hyperbolic spaces: this follows from a classification result in~\cite{DDK}, which also yields several uncountable families of CR submanifolds. Specifically, any orbit of minimum orbit type of any polar action on $\C H^n$ is induced by a subgroup of the solvable part of the Iwasawa decomposition of the isometry group of $\C H^n$. In some cases, such orbits are CR, including the focal sets of homogeneous (or even isoparametric) hypersurfaces with at most three principal curvatures in $\C H^n$~\cite{BD09},~\cite{DDS:advmath}. 
\medskip

The main purpose of this article is to present the classification of homogeneous CR submanifolds in complex hyperbolic spaces that arise as orbits of a subgroup of the solvable part of the Iwasawa decomposition of the isometry group of the ambient space. We briefly explain here the notation that is used in the main theorems of this paper.

Up to finite quotient, the connected component of the identity of the isometry group of $\C H^n$ is the simple Lie group $G=\SU(1,n)$. Let $KAN$ be its Iwasawa decomposition. Here $K\cong\mathsf{U}(n)$ is the isotropy subgroup of $G$ at some point $o\in\C H^n$. 
The solvable Lie group $AN$ acts simply transitively on $\C H^n$.
Let $\g{a}\oplus\g{n}$ be the Lie algebra of $AN$.
It is known that $\g{a}$ is $1$-dimensional, whereas $\g{n}$ is nilpotent and can be decomposed further as $\g{n}=\g{g}_\alpha\oplus\g{g}_{2\alpha}$, where $\g{g}_{2\alpha}$ is the $1$-dimensional center of $\g{n}$.
Moreover, $\g{g}_\alpha$ is isomorphic, as a vector space, to a complex vector space $\C^{n-1}$. In this paper, the symbol $\ominus$ denotes orthogonal complement.
See Section~\ref{sec:preliminaries} for more details and references. 

\begin{maintheorem1}\label{maintheorem1}
An orbit of the action of a connected Lie subgroup of $AN$ is a CR submanifold of $\mathbb{C}H^{n}$ if and only if it is congruent to the orbit $H\cdot g(o)$, where $H$ is the connected Lie subgroup of $AN$ with Lie algebra $\g{h}$ and $g\in AN$, for one of the following cases:
\begin{enumerate}[\rm \quad (i)]
\item $\g{h}=\g{r}$ and $g\in \Exp\bigl((\g{a}\oplus\g{n})\ominus\g{r}\bigr)$; in this case all the orbits of $H$ are totally real submanifolds that constitute a homogeneous subfoliation of a horosphere foliation.\label{th:A:r}
\item $\g{h}=\g{c}\oplus\g{r}\oplus\g{g}_{2\alpha}$ and
    $g\in \Exp\bigl(\g{a}\oplus(\g{g}_\alpha\ominus(\g{c}\oplus\g{r}))\bigr)$;
    in this case all the orbits of $H$ are CR submanifolds that are congruent to each other, and constitute a homogeneous subfoliation of a horosphere foliation.\label{th:A:crz}
\item $\g{h}=\g{a}\oplus\g{r}$ and $g\in\Exp\bigl((\g{g}_{\alpha}\ominus\mathbb{C}\g{r})\oplus\g{g}_{2\alpha}\bigr)$; in this case the CR orbits are totally real equidistant submanifolds to a totally geodesic $\R H^k$ in $\C H^n$, $k\in\{1,\dots, n\}$.\label{th:A:ar}
\item $\g{h}=\g{a}\oplus\g{c}\oplus\g{r}\oplus\g{g}_{2\alpha}$ and $g\in\Exp(J\g{r})$; in this case the CR orbits are the leaves of a homogeneous polar foliation with exactly one minimal leaf (called Berndt-Br\"uck submanifold) on a totally geodesic $\C H^k$ in $\C H^n$, $k\in\{2,\dots, n\}$.\label{th:A:acrz}
\end{enumerate}
Here, $\g{r}$ stands for a totally real subspace of $\g{g}_\alpha$, and $\g{c}$ for a complex subspace of $\g{g}_\alpha$.
\end{maintheorem1}

The next main goal of this paper is to study the congruence classes of these examples.
In the following theorem $\rho\colon\R\to(0,\infty)$ is the analytic function defined by $\rho(t)=(e^t-1)/t$, $t\neq 0$, $\rho(0)=1$.

\begin{maintheorem2}\label{maintheorem2}
Let $H_{1}$ and $H_{2}$ be two connected Lie subgroups of $AN$, and $\g{h}_1$ and $\g{h}_2$ their Lie algebras.
Assume that $H_1$ and $H_2$ act on $\C H^{n}$ in such a way that $H_{1}\cdot g_{1}(o)$ and $H_{2}\cdot g_{2}(o)$ are CR submanifolds, with $g_{1}$, $g_{2}\in AN$, as given by Theorem~A.
Then, $H_{1}\cdot g_{1}(o)$ and $H_{2}\cdot g_{2}(o)$ are congruent if and only if $\g{h}_{1}$ and $\g{h}_{2}$ correspond to the same type in Theorem~A, and according to the type:
\begin{enumerate}[\rm \quad (i)]
\item $g_i=\Exp(b_iB+JT_i+W_i+y_i Z)$, with $b_i$, $y_i\in\R$, $W_i\in\g{g}_\alpha\ominus\C\g{r}$, $T_i\in\g{r}$, $i\in\{1,2\}$, and $\rho(b_2/2)\lVert T_1\rVert=\rho(b_1/2)\lVert T_2\rVert$.\label{th:B:r}
\item In this case all the orbits are congruent.\label{th:B:crz}
\item $g_i=\Exp(W_i+y_i Z)$, with $y_i\in\R$, $W_i\in\g{g}_\alpha\ominus\C\g{r}$, $i\in\{1,2\}$, and $\lVert W_1\rVert=\lVert W_2\rVert$, $\lvert y_1\rvert=\lvert y_2\rvert$.\label{th:B:ar}
\item $g_i=\Exp(JT_i)$, with $T_i\in\g{r}$, $i\in\{1,2\}$, and $\lVert T_1\rVert=\lVert T_2\rVert$.\label{th:B:acrz}
\end{enumerate}
\end{maintheorem2}

%
%

As a consequence we have
\begin{corollaryC}
The moduli space of congruence classes of (non-trivial and proper) homogeneous CR submanifolds of $\C H^n$ induced by the action of subgroups of $AN$ is given by the disjoint union
\[
\bigl(I_{n-1}\times[0,\infty)\bigr)\sqcup I_{0,n-1}\sqcup\bigl([0,\infty)\sqcup(I_{n-1}\times[0,\infty)^2)\bigr)\sqcup\bigl(I_{n-1}\sqcup (I_{1,n-1}\times[0,\infty))\bigr),
\]
where $I_k=\{1,\dots, k\}$ and $I_{k,l}=\{(i,j)\in \mathbb{Z}^2: k\leq i\leq j\leq l\}$. 
\end{corollaryC}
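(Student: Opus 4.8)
The strategy is to read off the moduli space directly from Theorem~B, treating each of the four types (i)--(iv) of Theorem~A separately and then taking the disjoint union. The combinatorial bookkeeping consists of two independent pieces for each type: first, the discrete data, which is the congruence class of the pair $(\g{h}_i,$ choice of $\g{r}$ and $\g{c})$ up to the action of the isotropy-type symmetries on $\g{g}_\alpha\cong\C^{n-1}$; and second, the continuous data, which Theorem~B identifies with certain norms and scalar parameters. The point is that, by Theorem~B, two CR orbits are congruent if and only if these two pieces of data agree, so the moduli space is literally the product (discrete data)$\times$(continuous data) for each type, and then one unions over the four types.

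First I would handle the discrete parameters. For a totally real subspace $\g{r}\subseteq\g{g}_\alpha\cong\C^{n-1}$, congruence allows us to move $\g{r}$ by elements normalizing the relevant structure, and what survives as an invariant is $\dim\g{r}\in\{1,\dots,n-1\}=I_{n-1}$ in types (i), (iii), (iv) (in type (iv) one needs $\dim\g{r}\geq 1$ as well, matching $k\geq 2$ via $k=\dim\g{r}+1$). For type (ii), where $\g{h}=\g{c}\oplus\g{r}\oplus\g{g}_{2\alpha}$, the discrete datum is the pair $(\dim_\C\g{c},\dim\g{r})$, subject to $2\dim_\C\g{c}+\dim\g{r}\leq n-1$ and with $\g{r}\perp\C\g{c}$; reparametrizing by $i=\dim_\C\g{c}$ and $j=\dim_\C\g{c}+\dim\g{r}$ one gets exactly $0\leq i\leq j\leq n-1$, i.e.\ $I_{0,n-1}$, and Theorem~B(ii) says there is no further continuous invariant. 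This is the step I expect to require the most care: justifying that the only discrete invariant is the dimension (resp.\ the pair of dimensions), i.e.\ that any two totally real subspaces of the same dimension, or any two configurations $(\g{c},\g{r})$ with the same dimension pair, are carried to one another by an ambient isometry preserving the normal form of $g$; this uses transitivity of the unitary (and orthogonal) groups on the relevant Grassmannians together with the structure of $\Exp$ of the complementary subspace, and one must check it is compatible with the constraints Theorem~B places on $g_1,g_2$.

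Next I would read off the continuous parameters type by type using Theorem~B. In type (i), the only invariant beyond $\dim\g{r}$ is $\rho(b/2)\lVert T\rVert\in[0,\infty)$ (the common value of $\rho(b_i/2)\lVert T_i\rVert$, which ranges over all of $[0,\infty)$ as $b$ and $T$ vary), so the contribution is $I_{n-1}\times[0,\infty)$. In type (ii) there is no continuous invariant, giving $I_{0,n-1}$. In type (iii) the invariants are $\lVert W\rVert\in[0,\infty)$ and $\lvert y\rvert\in[0,\infty)$, but one must separate the subcase $W=0,\ y=0$ (equivalently $g\in\Exp(\g{g}_{2\alpha})$, landing on a totally geodesic $\C H^{k}$-type leaf with $k=n$ essentially, contributing a single point $[0,\infty)\ni$ nothing\,---\,here the bracketing in the statement, $[0,\infty)\sqcup(I_{n-1}\times[0,\infty)^2)$, records a degenerate stratum separately), from the generic subcase contributing $I_{n-1}\times[0,\infty)^2$. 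In type (iv) the invariant is $\lVert T\rVert\in[0,\infty)$ together with $\dim\g{r}\in I_{n-1}$, but again there is a degenerate substratum ($T=0$, the Berndt--Br\"uck submanifold sitting as the minimal leaf with no further parameter, or $k=n$) that is recorded separately: $I_{n-1}\sqcup(I_{1,n-1}\times[0,\infty))$. Assembling the four disjoint pieces in the order (i), (ii), (iii), (iv) yields exactly the displayed formula. The remaining work is purely bookkeeping: checking endpoint conventions (that $t=0$ genuinely occurs and is not already counted, that $\rho>0$ so the product map is onto $[0,\infty)$, and that "non-trivial and proper" excludes precisely the degenerate orbits one must exclude), which is routine given Theorems~A and~B.
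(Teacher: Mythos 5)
The overall strategy is the intended one: the paper states Corollary~C without proof, as immediate bookkeeping from Theorems~A and~B, and your treatment of types~(\ref{th:A:r}) and~(\ref{th:A:crz}) is essentially correct. (Small slip in type~(\ref{th:A:crz}): the constraint is $\dim_\C\g{c}+\dim_\R\g{r}\le n-1$, coming from $\g{c}\perp\C\g{r}$ inside $\g{g}_\alpha\cong\C^{n-1}$, not $2\dim_\C\g{c}+\dim\g{r}\le n-1$; with your parametrization $i=\dim_\C\g{c}$, $j=\dim_\C\g{c}+\dim_\R\g{r}$ the correct constraint is exactly what yields all of $I_{0,n-1}$.)

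The genuine gap is in types~(\ref{th:A:ar}) and~(\ref{th:A:acrz}), where you misidentify the strata responsible for the isolated summands $[0,\infty)$ and $I_{n-1}$. In type~(\ref{th:A:ar}) the invariants are always $(\lVert W\rVert,\lvert y\rvert)$; the isolated $[0,\infty)$ is \emph{not} a ``$W=0,\ y=0$'' stratum (that is just the origin of $[0,\infty)^2$, already counted), but the Lagrangian case $\dim\g{r}=n-1$: there $\C\g{r}=\g{g}_\alpha$, so $\g{g}_\alpha\ominus\C\g{r}=0$ forces $W=0$ and only $\lvert y\rvert\in[0,\infty)$ survives, while $\dim\g{r}\in\{0,\dots,n-2\}$ (equivalently $\R H^k$ with $k\in I_{n-1}$) gives $I_{n-1}\times[0,\infty)^2$. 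In type~(\ref{th:A:acrz}) the discrete datum is the \emph{pair} $(\dim_\C\g{c},\dim_\R\g{r})$ --- your account drops $\g{c}$ entirely --- and your claim that type~(\ref{th:A:acrz}) requires $\dim\g{r}\ge1$ is false (one needs $\dim_\C\g{c}+\dim_\R\g{r}\ge1$, i.e.\ $k\ge2$). The isolated $I_{n-1}$ is the stratum $\g{r}=0$: then $\Exp(J\g{r})=\{e\}$, there is no continuous parameter, and the orbits are the totally geodesic $\C H^{\dim_\C\g{c}+1}$ with $\dim_\C\g{c}+1\in\{1,\dots,n-1\}$ after discarding the improper case $\C H^n$; it is \emph{not} the ``$T=0$'' locus, which is again just $0\in[0,\infty)$ inside $I_{1,n-1}\times[0,\infty)$. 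The piece $I_{1,n-1}\times[0,\infty)$ then comes from $\dim\g{r}\ge1$ with $(i,j)=(\dim_\R\g{r},\dim_\C(\g{g}_\alpha\ominus\g{c}))$ and the invariant $\lVert T\rVert$. Since the entire content of the corollary is this case-by-case accounting, these misidentifications need to be fixed for the proof to stand.
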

(In the definition of $I_{k,l}$ we think $i=\dim_\R \g{r}$ and $j=\dim_\C(\g{g}_\alpha\ominus\g{c})$.)
\medskip

This article is organized as follows. In Section 2, we introduce the notation and known results that we will use throughout this paper. Section 3 is devoted to classifying homogeneous CR submanifolds in $\C H^n$ which arise as orbits of a connected subgroup of the solvable part of the Iwasawa decomposition of the isometry group of $\mathbb{C}H^{n}$. This problem is tackled in two steps. We first determine the subgroups producing at least a CR orbit (Proposition~\ref{prop:CR-o}). Then, we prove Theorem~A, where we present the classification result. Finally, in Section 4, we study the congruence classes of the examples, and prove Theorem~B.

\section{Preliminaries}\label{sec:preliminaries}
In this section we introduce some known results and notation that we use in this~paper.

\subsection{CR submanifolds}\hfill

Consider a complex Euclidean space $V$ with complex structure $J$ and inner product $\langle\cdot,\cdot\rangle$. A subspace $W\subset V$ is said to be complex if it is invariant by the complex structure, that is, if $JW\subset W$. The subspace $W$ is said to be totally real if $JW$ is perpendicular to $W$. In the setting of Hermitian manifolds, one can generalize these concepts by introducing the notions of complex and totally real submanifolds. Let $\bar{M}$ be a Hermitian manifold with complex structure $J$. A submanifold $M\subset\bar{M}$ is said to be complex (totally real) if at each point $p\in M$ the tangent space $T_{p}M$ is a complex (totally real) vector subspace of $T_{p}\bar{M}$. The subspace $J(T_{p}M)\cap T_{p}M$ is the maximal complex subspace of $T_{p}M$.

The notion of CR submanifold of a Hermitian manifold includes both complex and totally real submanifolds as particular examples. A submanifold $M\subset\bar{M}$ is said to be a CR (Cauchy-Riemann or complex-real) submanifold if there exists a pair of orthogonal complementary distributions of the tangent bundle $TM=\mathfrak{C}\oplus\mathfrak{R}$, where $\mathfrak{C}$ is complex and $\mathfrak{R}$ is totally real. In other words, $M$ is a CR submanifold of $\bar{M}$ if the maximal complex subspaces of the tangent spaces to $M$ have constant dimension along $M$ and their orthogonal complements in each tangent space are totally real subspaces. We refer to~\cite{bejancu} and~\cite{djoric} for more information on CR submanifolds of Hermitian manifolds.

\subsection{The complex hyperbolic space}\hfill

In what follows, we denote by $\mathbb{C}H^{n}$ the complex hyperbolic space of constant holomorphic sectional curvature $-1$. The complex hyperbolic space is known to be a symmetric space of non-compact type and rank one. As a symmetric space, it can be identified with the quotient space $G/K$, where $G=\SU(1,n)$ is, up to a finite quotient, the connected component of the identity element of the isometry group of $\mathbb{C}H^{n}$, and $K=G_{o}=\mathsf{S}(\mathsf{U}(1)\mathsf{U}(n))$ is the stabilizer of an element $o\in\mathbb{C}H^{n}$. Let $\mathfrak{g}$ and $\mathfrak{k}$ denote the Lie algebras of $G$ and $K$, respectively, and consider the Cartan decomposition $\mathfrak{g}=\mathfrak{k}\oplus\mathfrak{p}$ with respect to $o$, where $\mathfrak{p}$ denotes the orthogonal complement to $\mathfrak{k}$ with respect to the Killing form $B$ of $\mathfrak{g}$.
Denote by $\theta$ the associated Cartan involution, which satisfies $\theta|_{\mathfrak{k}}=\Id_{\mathfrak{k}}$ and $\theta|_{\mathfrak{p}}=-\Id_{\mathfrak{p}}$. Denote by $\ad$ and $\Ad$ the adjoint maps of $\mathfrak{g}$ and $G$, respectively. One can define a positive definite inner product $B_\theta$ on  $\mathfrak{g}$ by $B_{\theta}(X,Y):=-B(\theta X,Y)$. This inner product satisfies $B_{\theta}(\ad(X)Y,Z)=-B_{\theta}(Y,\ad(\theta X)Z)$, for all $X$, $Y$, $Z\in\mathfrak{g}$. Moreover, we can identify $\mathfrak{p}\cong T_{o}\mathbb{C}H^{n}$.

We select a maximal abelian subspace $\mathfrak{a}\subset\mathfrak{p}$.
Then $\g{a}$ is $1$-dimensional since $\mathbb{C}H^{n}$ is a rank one symmetric space,
and $\g{a}$ determines a geodesic through $o$.
For each covector $\lambda\in\mathfrak{a}^{*}$, we define the vector subspace $\mathfrak{g}_{\lambda}=
\{X\in\mathfrak{g}:\ad(H)X=\lambda(H)X, \text{ for each $H\in\mathfrak{a}$}\}$. If $\mathfrak{g}_{\lambda}\neq0$, then $\mathfrak{g}_{\lambda}$ is said to be a restricted root space, and each $\lambda\neq0$ such that $\mathfrak{g}_{\lambda}\neq0$ is called a restricted root. Notice that $\mathfrak{g}_{0}$ is always a restricted root space since $\mathfrak{a}\subset\mathfrak{g}_{0}$. It is known that in the case of the complex hyperbolic space the set of restricted roots consists exactly of four elements, $\Sigma=\{\pm\alpha, \pm2\alpha\}$. Then, $\mathfrak{a}$ determines a root space decomposition $\mathfrak{g}=\mathfrak{g}_{-2\alpha}\oplus\mathfrak{g}_{-\alpha}\oplus\mathfrak{g}_{0}
\oplus\mathfrak{g}_{\alpha}\oplus\mathfrak{g}_{2\alpha}$, which is an orthogonal direct sum with respect to $B_\theta$. Moreover, $[\mathfrak{g}_{\lambda},\mathfrak{g}_{\mu}]=\mathfrak{g}_{\lambda+\mu}$ and $\theta\mathfrak{g}_{\lambda}=\mathfrak{g}_{-\lambda}$ for all $\lambda$, $\mu\in\g{a}^*$.

Two unit speed, complete geodesics $\gamma_1$ and $\gamma_2$ in $\C H^n$ are said to be asymptotic if $d(\gamma_1(t),\gamma_2(t))$ remains bounded for large $t$, where $d$ denotes the Riemannian distance function.
This is an equivalence relation.
The ideal boundary of $\C H^n$, denoted by $\C H^n(\infty)$, is the quotient set by this relation.
The union $\overline{\C H^n}=\C H^n\cup\C H^n(\infty)$, when endowed with the cone topology, becomes homeomorphic to the closed unit ball of $\R^n$.

Now we choose a positivity criterion on $\Sigma$ such that $\Sigma^{+}=\{\alpha,2\alpha\}$ is the set of positive roots.
Equivalently, the geodesic determined by~$\g{a}$ has two limit points in the ideal boundary $\C H^n(\infty)$ of $\C H^n$;
choosing a positivity criterion in $\Sigma$ is the same as choosing one of these two limit points at infinity.
We denote by $x\in\C H^n(\infty)$ the point at infinity determined by this choice of positivity.
We define $\mathfrak{n}=\mathfrak{g}_{\alpha}\oplus\mathfrak{g}_{2\alpha}$, which turns out to be a $2$-step nilpotent Lie algebra. The Iwasawa decomposition theorem states that $\mathfrak{g}=\mathfrak{k}\oplus\mathfrak{a}\oplus\mathfrak{n}$ is a direct sum of vector spaces, and that there exists an analytic diffeomorphism $K\times A\times N\to G$, $(k,a,n)\mapsto kan$, where $A$ and $N$ denote the connected Lie subgroups of $G$ with Lie algebras $\mathfrak{a}$ and $\mathfrak{n}$, respectively.
We have $\mathfrak{g}_{0}=\mathfrak{k}_{0}\oplus\mathfrak{a}$, where $\mathfrak{k}_{0}=\mathfrak{g}_{0}\cap\mathfrak{a}\simeq\mathfrak{u}(n-1)$ is the normalizer of $\mathfrak{a}$ in $\mathfrak{k}$.
Both $\mathfrak{g}_{\alpha}$ and $\mathfrak{g}_{2\alpha}$ are normalized by $\mathfrak{k}_{0}$.
In fact, the corresponding connected Lie subgroup $K_0$ acts trivially on $\g{a}$ and $\g{g}_{2\alpha}$, and transitively on the unit sphere of $\g{g}_\alpha$.
It is known that $\mathfrak{a}\oplus\mathfrak{n}$ is a solvable Lie algebra, and that its associated connected Lie subgroup $AN$ acts simply and transitively on $\mathbb{C}H^{n}$. One can endow $AN$, and so $\mathfrak{a}\oplus\mathfrak{n}$, with a left-invariant metric $\langle\,\cdot\,,\,\cdot\,\rangle$ and a complex structure $J$ that make $\mathbb{C}H^{n}$ and $AN$ isomorphic as K\"ahler manifolds. Moreover, up to scaling of $B_\theta$, we have $\langle X,Y\rangle= B_{\theta}(X_{\mathfrak{a}},Y_{\mathfrak{a}})+\frac{1}{2} B_{\theta}(X_{\mathfrak{n}},Y_{\mathfrak{n}})$, for any $X$, $Y\in\mathfrak{a}\oplus\mathfrak{n}$, where the subscripts mean the $\mathfrak{a}$ and $\mathfrak{n}$ components, respectively. The complex structure $J$ on $\mathfrak{a}\oplus\mathfrak{n}$ satisfies that $\mathfrak{g}_{\alpha}$ is a $J$-invariant subspace, and $J\mathfrak{a}=\mathfrak{g}_{2\alpha}$.
The orbits of the action of $N$ on $\C H^n$ are horospheres centered at the point of infinity $x$ chosen above.
In fact, the group $K_0AN$ is a parabolic subgroup determined by $x$, that is, $K_0AN$ is the stabilizer in $\SU(1,n)$ of the point at infinity $x\in\C H^n(\infty)$.

Let $B\in\mathfrak{a}$ be a unit vector and define $Z=JB\in\mathfrak{g}_{2\alpha}$. In particular, $\langle B,B\rangle=B_{\theta}(B,B)=1$ and $2\langle Z,Z\rangle=B_{\theta}(Z,Z)=2$. Moreover, if $U$, $V\in\mathfrak{g}_{\alpha}$, the Lie bracket of $\mathfrak{a}\oplus\mathfrak{n}$ is given by the following relations:
\begin{equation}\label{eq:brackets}
\begin{aligned}%
{}[B,Z]&{}=Z,&  
[B,U]&{}=\frac{1}{2}\,U,&
[U,V]&{}=\langle JU,V\rangle Z,&
[U,Z]&{}=0.&
\end{aligned}
\end{equation}

Using these formulas we get

\begin{lemma}\label{lemma:Ad}
Let $a$, $b$, $x$, $y\in\R$ and $X$, $Y\in\g{g}_\alpha$, and define $g=\Exp(bB+X+yZ)$.~Then,
\begin{align*}
\Ad(g)(aB+Y+xZ)={}&
aB+e^{b/2}Y-\frac{a}{2}\rho\Bigl(\frac{b}{2}\Bigr)X
+\Bigl(xe^b-ay\rho(b)+e^{b/2}\rho\Bigl(\frac{b}{2}\Bigr)\langle JX,Y\rangle\Bigr)Z,
\end{align*}
where $\rho\colon\R\to(0,\infty)$ is the analytic function given by $\rho(t)=(e^t-1)/t$, $t\neq 0$, $\rho(0)=1$.
\end{lemma}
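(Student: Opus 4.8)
The plan is to use the standard identity $\Ad(\Exp W)=e^{\ad W}$ with $W=bB+X+yZ$, restricted to the subalgebra $\g{a}\oplus\g{n}$, where the brackets are those of~\eqref{eq:brackets}. The first step is to compute $\ad(W)$ relative to the vector space decomposition $\g{a}\oplus\g{n}=\R B\oplus\g{g}_\alpha\oplus\R Z$. Reading off~\eqref{eq:brackets}, for $\lambda$, $\mu\in\R$ and $V\in\g{g}_\alpha$ one gets
\[
\ad(W)(\lambda B+V+\mu Z)=\Bigl(\tfrac{b}{2}V-\tfrac{\lambda}{2}X\Bigr)+\bigl(b\mu-\lambda y+\langle JX,V\rangle\bigr)Z,
\]
so that $\ad(W)$ is block lower triangular with respect to the above ordering: the $\R B$-component is annihilated, the $\g{g}_\alpha$-component depends only on the $\R B$- and $\g{g}_\alpha$-parts of the input, and the $\R Z$-component depends on all three.

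Rather than summing the exponential series directly, I would integrate the linear ODE $\gamma'(t)=\ad(W)\gamma(t)$ with initial condition $\gamma(0)=aB+Y+xZ$, since $\Ad(g)(aB+Y+xZ)=\gamma(1)$. Writing $\gamma(t)=\lambda(t)B+V(t)+\mu(t)Z$, the triangular structure lets us solve the three equations in cascade: from $\lambda'=0$ we get $\lambda\equiv a$; then $V'=\tfrac{b}{2}V-\tfrac{a}{2}X$ is a first-order linear equation whose solution, via the integrating factor $e^{-bt/2}$, is $V(t)=e^{bt/2}Y-\tfrac{at}{2}\rho\bigl(\tfrac{bt}{2}\bigr)X$; finally $\mu'=b\mu-ay+\langle JX,V(t)\rangle$, where the crucial simplification is that $\langle JX,V(t)\rangle=e^{bt/2}\langle JX,Y\rangle$, because $\langle JX,X\rangle=0$ by compatibility of $J$ with the metric. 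Solving this last equation with the integrating factor $e^{-bt}$ and evaluating the three components at $t=1$, the elementary quotients $(e^{b}-1)/b$ and $(e^{b/2}-1)/(b/2)$ that appear are rewritten as $\rho(b)$ and $\rho(b/2)$, which yields precisely the stated formula.

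There is no genuine obstacle here: the computation is essentially bookkeeping. The two points that do require a little care are, first, recognizing that the cross term $\langle JX,X\rangle$ vanishes, without which the $Z$-component would fail to close up into the clean expression above; and second, matching the elementary combinations of exponentials with $\rho$ evaluated at $b$ and at $b/2$. The apparent divisions by $b$ in the intermediate steps are harmless, since both sides of the claimed identity are real-analytic in all parameters: the case $b=0$ follows either by continuity (equivalently, $\rho(0)=1$) or by analytic continuation from $b\neq0$. As an alternative to the ODE, one could compute the powers $\ad(W)^k$ directly and sum $\sum_k\ad(W)^k/k!$, again using $\langle JX,X\rangle=0$ to keep the recursion tractable, but the cascade of linear ODEs is the cleanest route.
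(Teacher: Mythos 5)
Your proof is correct, and it takes a genuinely different route from the paper's. The paper also starts from $\Ad(\Exp W)=e^{\ad W}$, but it proceeds by establishing, by induction, a closed formula for all the iterates $\ad^k(bB+X+yZ)$ applied to $aB+Y+xZ$ (for $k\geq 2$ these are $b^{k-1}\bigl(\tfrac{b}{2^k}Y-\tfrac{a}{2^k}X+(bx-ay+2(1-2^{-k})\langle JX,Y\rangle)Z\bigr)$), and then sums the exponential series term by term, recognizing $\rho(b)$ and $\rho(b/2)$ in the resulting power series. You instead exploit the block-triangular structure of $\ad(W)$ to solve the cascade of linear ODEs for $\gamma(t)=e^{t\ad W}(aB+Y+xZ)$ and evaluate at $t=1$; the integrating-factor computations produce $t\rho(bt/2)$ and $t\rho(bt)$ directly as the integrals $\int_0^t e^{b(t-s)/2}\,ds$ and $\int_0^t e^{b(t-s)}\,ds$, with no induction and no series manipulation. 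Both arguments hinge on the same two observations you single out — that $\langle JX,X\rangle=0$ keeps the $Z$-component closed, and that the elementary exponential combinations repackage as $\rho$ — and your remark that the apparent divisions by $b$ are resolved by analyticity (or simply by writing the integrals without dividing) is accurate. The ODE route is arguably cleaner in that the closed form for $\ad^k$ never has to be guessed; the paper's route has the minor advantage of exhibiting the iterated brackets explicitly. Either way the verification is complete and the final expression matches the statement.
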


\begin{proof}
Using the bracket relations~\eqref{eq:brackets} it is easy to prove by induction
\begin{align*}
\ad(bB+X+yZ)(aB+Y+xZ)={}&
\frac{b}{2}Y-\frac{a}{2}X+\bigl(bx-ay+\langle JX,Y\rangle\bigr)Z,\\
\ad^{k}(bB+X+yZ)(aB+Y+xZ)={}&
b^{k-1}\Bigl(\frac{b}{2^{k}}Y-\frac{a}{2^k}X
+\bigl(bx-ay+2\Bigl(1\!-\!\frac{1}{2^k}\Bigr)\langle JX,Y\rangle\bigr)Z\Bigr),
\end{align*}
for any $k\geq 2$. Now, recalling that
\[
\Ad(\Exp(X))Y=e^{\ad(X)}Y=\sum_{k=0}^\infty\frac{1}{k!}\ad^k(X)Y,
\]
for any $X$, $Y\in\g{g}$, the result follows after grouping terms and doing some calculations.
\end{proof}

Finally, we recall the expression for the Levi-Civita connection $\bar{\nabla}$ of the complex hyperbolic space (see, for example, \cite{BD09} or \cite{BTV}):
\begin{equation}\label{eq:Levi-Civita}
\begin{aligned}
\bar{\nabla}_{aB+U+xZ}(bB+V+yZ)={}&
\Bigl(\frac{1}{2}\langle U,V\rangle+xy\Bigr)B
-\frac{1}{2}\bigl(bU+yJU+xJV\bigr)\\
&{}+\Bigl(\frac{1}{2}\langle JU,V\rangle-bx\Bigr)Z.
\end{aligned}
\end{equation}

\section{Proof of Theorem~A}

The aim of this section is to find all homogeneous CR submanifolds in complex hyperbolic spaces $\C H^n$ that arise as orbits of connected subgroups of the solvable part of the Iwasawa decomposition of $G=\SU(1,n)$.
Hence, we will determine the connected subgroups $H$ of $AN$ that act on $\mathbb{C}H^{n}$ producing a CR orbit.

Let $H$ be a connected Lie subgroup of $AN$, one of whose orbits is CR.
Since $AN$ acts transitively on $\C H^n$, we may assume that the orbit that is CR is precisely the one through the point $o\in \C H^n$ that determines the compact subgroup $K$ of $G$.
Moreover, since the isometries of $AN$ are holomorphic, a homogeneous submanifold of $\C H^n$ is CR if and only if its tangent space is a CR subspace of the tangent space of $\C H^n$ at some point.
Therefore, it follows that the problem of classifying homogeneous CR submanifolds in the complex hyperbolic space given by the action of a connected Lie subgroup of $AN$ reduces to finding all the Lie subalgebras $\mathfrak{h}$ of $\g{a}\oplus\g{n}$ that can be decomposed into an orthogonal direct sum of a totally real and a complex subspace.

\begin{proposition}\label{prop:CR-o}
Let $H$ be a connected Lie subgroup of $AN$ acting on $\mathbb{C}H^{n}$ in such a way that the orbit $H\cdot o$ through $o$ is a CR-submanifold. Then, its Lie algebra $\g{h}$ is conjugate to $\g{b}\oplus\g{c}\oplus\g{r}\oplus\g{z}$, where $\g{b}$ is a subspace of $\g{a}$, $\g{c}$ is a complex subspace of $\g{g}_\alpha$, $\g{r}$ is totally real subspace of $\g{g}_{\alpha}$, and $\g{z}$ is a subspace of $\g{g}_{2\alpha}$ containing $[\g{c},\g{c}]$.
\end{proposition}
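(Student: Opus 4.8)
The plan is to argue entirely on the Lie algebra level. As recalled just before the statement, it suffices to show that a subalgebra $\g{h}\subseteq\g{a}\oplus\g{n}$ which decomposes as an orthogonal sum of a complex and a totally real subspace of $T_o\C H^n\cong\g{a}\oplus\g{n}$ is conjugate, by an element of $AN$, to one of the claimed form. First I would consider the projection of $\g{h}$ onto $\g{a}=\R B$ along $\g{n}$. Since $\dim\g{a}=1$, either $\g{h}\subseteq\g{n}$, or $\g{h}$ contains some $B+V_0+y_0Z$ with $V_0\in\g{g}_\alpha$ and $y_0\in\R$; in the second case Lemma~\ref{lemma:Ad} gives $\Ad\bigl(\Exp(2V_0+y_0Z)\bigr)(B+V_0+y_0Z)=B$ (using $\langle JV_0,V_0\rangle=0$), so after replacing $\g{h}$ by this $AN$-conjugate I may assume $B\in\g{h}$.

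Next I would use the weight space decomposition of $\ad(B)$, which by~\eqref{eq:brackets} is diagonalizable on $\g{n}$ with eigenvalue $1/2$ on $\g{g}_\alpha$ and $1$ on $\g{g}_{2\alpha}$. If $B\in\g{h}$, then $\g{h}=\R B\oplus(\g{h}\cap\g{n})$, the subspace $\g{h}\cap\g{n}$ is $\ad(B)$-invariant, and hence $\g{h}=\R B\oplus\g{w}\oplus\g{z}$ with $\g{w}=\g{h}\cap\g{g}_\alpha$ and $\g{z}=\g{h}\cap\g{g}_{2\alpha}$. If instead $\g{h}\subseteq\g{n}$, I would distinguish two subcases: if $\g{g}_{2\alpha}\subseteq\g{h}$, the same bookkeeping gives $\g{h}=\g{w}\oplus\g{g}_{2\alpha}$ with $\g{w}=\g{h}\cap\g{g}_\alpha$; if $\g{h}\cap\g{g}_{2\alpha}=0$, then the projection $\g{h}\to\g{g}_\alpha$ is injective with some image $\g{w}$, so $\g{h}$ is the graph of a linear map $f\colon\g{w}\to\g{g}_{2\alpha}$, the relations~\eqref{eq:brackets} force $\langle JU,V\rangle=0$ for all $U,V\in\g{w}$ (so $\g{w}$ is totally real and $\g{h}$ is abelian), and a further conjugation by $\Exp(X)$, with $X\in\g{g}_\alpha$ chosen via Riesz representation so that $\langle JX,U\rangle=-f(U)$ for all $U\in\g{w}$ (Lemma~\ref{lemma:Ad}), straightens $\g{h}$ to $\g{w}$.

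In every case we have reached $\g{h}=\g{b}\oplus\g{w}\oplus\g{z}$ as an orthogonal sum, with $\g{b}\subseteq\g{a}$, $\g{w}\subseteq\g{g}_\alpha$, $\g{z}\subseteq\g{g}_{2\alpha}$. Since $\R B\oplus\R Z$ (with $JB=Z$) and $\g{g}_\alpha$ are mutually orthogonal $J$-invariant subspaces with sum $T_o\C H^n$, a direct computation of $\g{h}\cap J\g{h}$ shows that the maximal complex subspace of $\g{h}$ equals $\bigl((\g{b}\oplus\g{z})\cap J(\g{b}\oplus\g{z})\bigr)\oplus(\g{w}\cap J\g{w})$ and that its orthogonal complement in $\g{h}$ is totally real if and only if $\g{w}\ominus(\g{w}\cap J\g{w})$ is; so the CR hypothesis forces $\g{w}=\g{c}\oplus\g{r}$ with $\g{c}=\g{w}\cap J\g{w}$ complex and $\g{r}=\g{w}\ominus\g{c}$ totally real. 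Finally, if $\g{c}\neq0$ then $[\g{c},\g{c}]=\g{g}_{2\alpha}$ by~\eqref{eq:brackets}, and since $\g{c}\subseteq\g{h}$ this lies in $\g{h}\cap\g{g}_{2\alpha}=\g{z}$, while if $\g{c}=0$ the inclusion $[\g{c},\g{c}]\subseteq\g{z}$ is automatic; hence $\g{h}=\g{b}\oplus\g{c}\oplus\g{r}\oplus\g{z}$ has exactly the asserted shape. I expect the only real difficulty to be organizing the case analysis and noticing that $AN$-conjugations are genuinely needed — already for lines such as $\R(U+Z)$, which is $AN$-conjugate to $\R U$ — together with reading off the correct conjugating elements from Lemma~\ref{lemma:Ad}; no single step should be deep.
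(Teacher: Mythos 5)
There is a genuine gap, and it sits exactly where the paper's proof has to work hardest. Your strategy is to conjugate first (by $g=\Exp(2V_0+y_0Z)\in AN$, so that $B\in\Ad(g)\g{h}$), split the straightened algebra along the eigenspaces of $\ad(B)$ into $\g{b}\oplus\g{w}\oplus\g{z}$, and only then invoke the CR hypothesis to conclude that $\g{w}$ decomposes as $\g{c}\oplus\g{r}$. But the CR hypothesis is a statement about the original $\g{h}$, not about $\Ad(g)\g{h}$, and being a CR subspace of $(\g{a}\oplus\g{n},\langle\cdot,\cdot\rangle,J)$ is \emph{not} invariant under $\Ad(g)$ for $g\in AN$: such maps are Lie algebra automorphisms but are neither isometries nor $J$-linear (e.g.\ $\Ad(\Exp(X))B=B-\tfrac12X$ while $\Ad(\Exp(X))Z=Z=JB$). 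Concretely, $\g{h}=\R(B+X)\oplus\g{g}_{2\alpha}$ with $X\neq0$ is a subalgebra that is not CR (since $\langle J(B+X),Z\rangle=1$ and $\g{h}\cap J\g{h}=0$), yet its straightening $\Ad(\Exp(2X))\g{h}=\g{a}\oplus\g{g}_{2\alpha}=\C B$ is complex, hence CR; and Theorem~A(iii)--(iv) of the paper gives examples in the opposite direction, where $\g{h}$ is CR but $\Ad(g)\g{h}$ is not. So in the branch where $\g{z}=\g{g}_{2\alpha}$ and the $\g{a}$-projection of $\g{h}$ is nonzero, your computation of the maximal complex subspace of $\g{b}\oplus\g{w}\oplus\g{z}$ is applied to an algebra about which you have no CR information, and the conclusion $\g{w}=\g{c}\oplus\g{r}$ is unsupported. (The branches where $\g{z}=0$, or where $\g{h}\subseteq\g{n}$, are fine: there either the bracket relations alone force $\g{w}$ totally real, or no conjugation is performed before the CR hypothesis is used.)

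What your argument omits is precisely the long computation in case~(b) of the paper's proof: starting from the \emph{tilted} algebra $\g{h}=\R(B+X)\oplus\g{w}\oplus\g{g}_{2\alpha}$ with $X\in\g{g}_\alpha\ominus\g{w}$ and assuming \emph{it} is CR, one must show that $JX\in\g{w}$ and that $\g{w}$ is itself a CR subspace of $\g{g}_\alpha$; only after that structural information has been extracted is it legitimate to conjugate by $\Exp(2X)$ and read off the normal form $\g{a}\oplus\g{c}\oplus\g{r}\oplus\g{g}_{2\alpha}$. This step is delicate because $J(B+X)=Z+JX$, so the maximal complex subspace of the tilted algebra does not split along $(\R B\oplus\R Z)\oplus\g{g}_\alpha$ the way it does for the straightened one. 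Your case organization via the $\ad(B)$-eigenspace decomposition is a clean alternative to the paper's projection $\pi\colon\g{g}\to\g{a}\oplus\g{g}_{2\alpha}$, and the remaining branches are handled correctly, but the proof is incomplete until this transfer of the CR condition across the conjugation is replaced by an argument on the unstraightened algebra.
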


\begin{proof}
We consider the orthogonal projection
$\pi\colon \mathfrak{g}\to\mathfrak{a}\oplus\mathfrak{g}_{2\alpha}$. We have two possibilities:

Case (a): $\pi$ is not surjective. In this case $\pi(\g{h})\neq\g{a}\oplus\g{g}_{2\alpha}$. Hence, there exists a subspace $\g{w}\subset\g{g}_\alpha$, $a$, $x\in\R$, and $X\in\g{g}_\alpha\ominus\g{w}$ such that $\g{h}=\R(aB+X+xZ)\oplus\g{w}$. Here and henceforth $\ominus$ denotes orthogonal complement.

Let $U$, $V\in\g{w}$. Since $\g{h}$ is a Lie subalgebra, $\langle JU,V\rangle Z=[U,V]\in\g{h}\cap\g{g}_{2\alpha}$. Thus $aB+X=0$ or $\langle JU,V\rangle=0$.

Assume first that $aB+X=0$, that is, $a=0$ and $X=0$. Then, $\g{h}=\g{w}\oplus\g{z}$, where $\g{z}=\g{g}_{2\alpha}$ if $x\neq 0$, or $\g{z}=0$ if $x=0$.
We define the maximal complex subspace $\g{c}=\g{w}\cap J\g{w}$ of $\g{w}$ in $\g{g}_\alpha\cong\C^{n-1}$, and $\g{r}=\g{w}\ominus\g{c}$.
Since $J\g{g}_{2\alpha}=\g{a}$, it turns out that $\g{h}\cap J\g{h}=\g{c}$ is also the maximal complex subspace of $\g{h}$. Since $\g{h}$ is CR by assumption, $\g{h}\ominus\g{c}$ is totally real, which implies that $\g{r}$ is totally real.
If $x=0$, that is, if $\g{h}=\g{w}=\g{c}\oplus\g{r}$, we must have $\g{c}=0$, as otherwise, $\g{g}_{2\alpha}=[\g{c},\g{c}]\subset\g{h}$.
Thus, $\g{h}=\g{r}$ is a totally real subspace of $\g{g}_\alpha$, and we take $\g{b}=0$, $\g{z}=0$.
If $x\neq 0$, then $\g{h}=\g{c}\oplus\g{r}\oplus\g{g}_{2\alpha}$, with $\g{c}$ complex in $\g{g}_\alpha$, $\g{r}$ totally real in $\g{g}_\alpha$, and $\g{b}=0$ in the notation of Proposition~\ref{prop:CR-o}.

Therefore, we may assume $aB+X\neq 0$. This implies $\langle JU,V\rangle=0$ for all $U$, $V\in\g{w}$. Hence, $\g{w}$ is totally real as a subspace of $\g{g}_\alpha\cong\C^{n-1}$. Moreover, for each $U\in\g{w}\subset\g{h}$ we have
\[
\frac{a}{2}\,U+\langle JX,U\rangle Z=[aB+X+xZ,U]\in\g{h},
\]
which implies $\langle JX,U\rangle Z\in\g{h}\cap\g{g}_{2\alpha}=0$.
Hence, $X\in\g{g}_\alpha\ominus\C \g{w}$, or equivalently, $\R X\oplus\g{w}\subset\g{g}_\alpha$ is totally real.

If $a\neq 0$, we define $g=\Exp(\frac{2}{a}X+\frac{x}{a}Z)$. Using Lemma~\ref{lemma:Ad} we get
$\Ad(g)(aB+X+xZ)=aB$, and $\Ad(g)(U)=U$ for each $U\in\g{w}$.
Then, $\Ad(g)\g{h}=\g{a}\oplus\g{w}$, where $\g{w}$ is totally real. Thus, $\g{b}=\g{a}$, $\g{c}=0$, $\g{r}=\g{w}$, and $\g{z}=0$ in the notation of Proposition~\ref{prop:CR-o}.

Finally, assume $a=0$.  Then, $X\neq 0$.
In this case we define $g=\frac{x}{\lVert X\rVert^2}JX$. Using Lemma~\ref{lemma:Ad} we get
$\Ad(g)(X+xZ)=X$, and $\Ad(g)(U)=U$ for each $U\in\g{w}$,
that is, $\Ad(g)\g{h}=\R X\oplus\g{w}$.
Thus, we take $\g{b}=0$, $\g{c}=0$, $\g{z}=0$, and $\g{r}=\R X\oplus\g{w}$ is a totally real subspace of $\g{g}_\alpha$.

Case (b): $\pi$ is surjective, that is, $\pi(\g{h})=\mathfrak{a}\oplus\mathfrak{g}_{2\alpha}$.
Then, there exist a subspace $\g{w}\subset\g{g}_{\alpha}$ and $X$, $Y\in\g{g}_{\alpha}\ominus\g{w}$ such that
$\g{h}=\R(B+X)\oplus\g{w}\oplus\R(Y+Z)$.

For any $U\in\g{w}\subset\g{h}$ we have $\frac{1}{2}U+\langle JX,U\rangle Z=[B+X,U]\in\g{h}$.
Thus, $\langle JX,U\rangle Z\in\g{h}\cap\g{g}_{2\alpha}$. Hence $Y=0$ or $\langle JX,U\rangle=0$ for each $U\in\g{w}$.

Assume $Y=0$, that is, $\g{h}=\R(B+X)\oplus\g{w}\oplus\g{g}_{2\alpha}$. We first show that $\g{w}$ is a CR subspace of $\g{g}_\alpha$. 
Let $\g{c}=\g{w}\cap J\g{w}$ be the maximal complex subspace of $\g{w}$. 
Since $B+X$, $Z\in\g{h}\ominus\g{c}$ and $\langle J(B+X),Z\rangle\neq 0$, $\g{h}\ominus\g{c}$ is not totally real. 
Then there exists $\xi'\in(\g{h}\cap J\g{h})\ominus\g{c}$, $\xi'\neq 0$. Let us put $\xi'=a(B+X)+W'+xZ$, for some $W'\in\g{w}$, and where $a$ and $x$ cannot vanish simultaneously. By assumption, $-x(B+X)+xX+aJX+JW'+aZ=-xB+aJX+JW'+aZ=J\xi'\in\g{h}$, and then $xX+aJX+JW'\in\g{w}$. Thus we can take $\xi=(a\xi'-xJ\xi')/(a^2+x^2)\in(\g{h}\cap J\g{h})\ominus\g{c}$, which is of the form $\xi=B+X+W$, with $W\in\g{w}$. Hence, $J\xi=JX+JW+Z\in\g{h}$, where $JX+JW\in\g{w}$. Then $\eta=JX+JW-(\|X\|^2+\|W\|^2)Z\in\g{h}\ominus\C\xi$. Let us decompose $\eta=\eta_c+\eta_r$, where $\eta_c\in\g{h}\cap J\g{h}$ and $\eta_r\in \g{h}\ominus(\g{h}\cap J\g{h})$. Since $\g{h}$ is CR, then $J\eta=J\eta_c+J\eta_r$ with $J\eta_c\in\g{h}$ and $J\eta_r\in(\g{a}\oplus\g{n})\ominus\g{h}$. But
\begin{align*}
J\eta&=-X-W+(\|X\|^2+\|W\|^2)B\\
&=\left(\frac{\|W\|^2}{1+\|X\|^2}(B+X)-W\right)+\frac{1+\|X\|^2+\|W\|^2}{1+\|X\|^2}(\|X\|^2B-X),
\end{align*}
where the first addend belongs to $\g{h}$ and the second one is orthogonal to $\g{h}$. We deduce in particular that $J\eta_c=\frac{\|W\|^2}{1+\|X\|^2}(B+X)-W$, and thus, $-\frac{\|W\|^2}{1+\|X\|^2}(Z+JX)+JW=\eta_c\in\g{h}$. Since $Z$, $JX+JW\in\g{h}$, we get  $JX$, $JW\in\g{h}$. In particular, $JX\in\g{w}$. Then \[
\g{h}=\C(B+X)\oplus\R(\|X\|^2Z-JX)\oplus(\g{w}\ominus\R JX)
\]
is a $\C$-orthogonal direct sum, from where we deduce that $\g{w}$ is a CR subspace of $\g{g}_\alpha$.
Now let $g=\Exp(2X)$. Then, Lemma~\ref{lemma:Ad} yields $\Ad(g)(B+X)=B$, $\Ad(g)(U)=U+2\langle JX,U\rangle Z$ for each $U\in\g{w}$, and $\Ad(g)(Z)=Z$.
This implies $\Ad(g)\g{h}=\g{a}\oplus\g{w}\oplus\g{g}_{2\alpha}$. Since $\g{w}$ is a CR subspace of $\g{g}_\alpha$, we have the orthogonal decomposition $\Ad(g)\g{h}=\g{a}\oplus\g{c}\oplus\g{r}\oplus\g{g}_{2\alpha}$, with $\g{c}$ complex and $\g{r}$ totally real in $\g{g}_\alpha$.


Hence, we assume from now on $Y\neq 0$. Recall that this implies $\langle JX,U\rangle=0$ for each $U\in\g{w}$, that is, $X\in\g{g}\ominus\C\g{w}$.
Similarly, for each $U\in\g{w}$, the fact that $\g{h}$ is a Lie subalgebra yields $\langle JU,Y\rangle Z=[U,Y+Z]\in\g{h}\cap\g{g}_{2\alpha}=0$, which implies $Y\in\g{g}_\alpha\ominus\C\g{w}$.
Moreover, for each $U$, $V\in\g{w}\subset\g{h}$ we have $\langle JU,V\rangle Z=[U,V]\in\g{h}\cap\g{g}_{2\alpha}=0$. Thus, $\g{w}$ is totally real. We also have
\[
\frac{1}{2}\,Y+\bigl(1+\langle JX,Y\rangle\bigr)Z=[B+X,Y+Z]\in\mathfrak{h}.
\]
Since $Y\neq 0$, we get $1+\langle JX,Y\rangle=1/2$, that is, $\langle JX,Y\rangle=-1/2$.

By assumption, $\g{h}$ is a CR subspace of $\g{a}\oplus\g{n}\cong\C^n$. Since $\g{h}=\spann\{B+X,Y+Z\}\oplus\g{w}$ is a $\C$-orthogonal direct sum, it follows that $\R(B+X)\oplus\R(Y+Z)$ is either complex or totally real.
Observe that
\begin{align*}
0&{}=\langle J(B+X),Y+Z\rangle=\langle Z+JX,Y+Z\rangle=1+\langle JX,Y\rangle
\end{align*}
implies $\langle JX,Y\rangle=-1$, which contradicts $\langle JX,Y\rangle=-1/2$. Consequently, $\R(B+X)\oplus\R(Y+Z)$ is a complex subspace. Since $J(B+X)=Z+JX$, then $Y=JX$. Hence,
$-1/2=\langle JX,Y\rangle=\lVert X\rVert^{2}$, which gives a contradiction. Thus, this case is not possible.
\end{proof}

Now that we know the subgroups $H$ of $AN$ that have a CR orbit through $o\in\C H^n$, we must study which orbits of the $H$-action are CR submanifolds.
Since $AN$ acts transitively on the complex hyperbolic space, it will be enough to determine the elements $g\in AN$ such that the orbit $H\cdot g(o)$ is a CR submanifold. The next result reduces the set of elements of $AN$ to investigate.

\begin{lemma}\label{lemma:slice}
Let $\g{h}=\g{b}\oplus\g{c}\oplus\g{r}\oplus\g{z}$, with $\g{b}$ a subspace of $\g{a}$, $\g{c}$ complex in $\g{g}_\alpha$, $\g{r}$ totally real in $\g{g}_\alpha$, and $\g{z}$ a subspace of $\g{g}_{2\alpha}$ such that $[\g{c},\g{c}]\subset\g{z}$.
Let $H$ be the connected Lie subgroup of $AN$ whose Lie algebra is $\g{h}$.
Then, each orbit of $H$ can be written as $H\cdot \Exp(X)(o)$ with $X\in(\g{a}\oplus\g{n})\ominus\g{h}$.
\end{lemma}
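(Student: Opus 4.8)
Because $AN$ acts simply transitively on $\C H^n$, the orbit map $AN\to\C H^n$, $g\mapsto g(o)$, is a diffeomorphism which intertwines the $H$-action with left translation on $AN$; hence the orbit $H\cdot g(o)$ corresponds to the right coset $Hg$, and the statement is equivalent to the purely group-theoretic identity
\[
AN=H\cdot\Exp\bigl((\g{a}\oplus\g{n})\ominus\g{h}\bigr).
\]
Indeed, given this, any $g\in AN$ can be written $g=h\,\Exp(X)$ with $h\in H$ and $X\in(\g{a}\oplus\g{n})\ominus\g{h}$, so $H\cdot g(o)=Hh\,\Exp(X)(o)=H\cdot\Exp(X)(o)$. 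I write $\g{h}^\perp=(\g{a}\ominus\g{b})\oplus(\g{g}_\alpha\ominus(\g{c}\oplus\g{r}))\oplus(\g{g}_{2\alpha}\ominus\g{z})$, which is legitimate since the restricted root space decomposition is $B_\theta$-orthogonal and $\g{b}\subseteq\g{a}$, $\g{c}\oplus\g{r}\subseteq\g{g}_\alpha$, $\g{z}\subseteq\g{g}_{2\alpha}$. Recall also that $\g{b}\in\{0,\g{a}\}$, $\g{z}\in\{0,\g{g}_{2\alpha}\}$ (since $\dim\g{a}=\dim\g{g}_{2\alpha}=1$), and $\g{z}=\g{g}_{2\alpha}$ whenever $\g{c}\ne0$, because $[\g{c},\g{c}]=\g{g}_{2\alpha}$ in that case.

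The argument then rests on three elementary facts. First, $\Exp$ restricts to diffeomorphisms $\g{n}\to N$ and $\g{g}_{2\alpha}\to\Exp(\g{g}_{2\alpha})$, and $\Exp(\g{g}_{2\alpha})$ is central in $N$ since $\g{g}_{2\alpha}$ is the centre of the $2$-step nilpotent $\g{n}$. Second, by Lemma~\ref{lemma:Ad}, $\Ad(\Exp(sB))$ acts on $\g{g}_\alpha$ as multiplication by $e^{s/2}$ and on $\g{g}_{2\alpha}$ as multiplication by $e^{s}$, so it preserves every linear subspace of $\g{g}_\alpha$ and of $\g{g}_{2\alpha}$. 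Third, for $U$, $U'\in\g{g}_\alpha$ and $v$, $v'\in\R$, the Baker--Campbell--Hausdorff formula reduces, by $2$-step nilpotency, to $\Exp(U+vZ)\Exp(U'+v'Z)=\Exp\bigl(U+U'+(v+v'+\frac12\langle JU,U'\rangle)Z\bigr)$, and likewise $\Exp(sB)\Exp(U+vZ)=\Exp\bigl(sB+\lambda(s)U+\mu(s)vZ\bigr)$ for suitable $\lambda(s),\mu(s)>0$, the surviving bracket terms being scalar multiples of $U$, resp.\ $vZ$. From the third fact one gets $\Exp(\g{h}^\perp)=\Exp(\g{a}\ominus\g{b})\,\Exp(\g{g}_\alpha\ominus(\g{c}\oplus\g{r}))\,\Exp(\g{g}_{2\alpha}\ominus\g{z})$ and, crucially,
\[
\Exp(\g{c}\oplus\g{r})\,\Exp(\g{z})\,\Exp(\g{g}_\alpha\ominus(\g{c}\oplus\g{r}))\,\Exp(\g{g}_{2\alpha}\ominus\g{z})=N,
\]
because $\Exp(\g{z})\Exp(\g{g}_{2\alpha}\ominus\g{z})=\Exp(\g{g}_{2\alpha})$ is central and may be slid to the far right, while $\Exp(\g{c}\oplus\g{r})\Exp(\g{g}_\alpha\ominus(\g{c}\oplus\g{r}))$ realizes every element of the form $\Exp(W+\phi(W)Z)$ with $W\in\g{g}_\alpha$, so that multiplying on the right by $\Exp(\g{g}_{2\alpha})$ sweeps out all of $N$. (Here $\Exp(\g{c}\oplus\g{r})$ denotes only the \emph{subset} $\{\Exp(W):W\in\g{c}\oplus\g{r}\}\subseteq H$, not a subgroup, but that is all that is needed.)

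Finally I would assemble these pieces, distinguishing the two possibilities for $\g{b}$. If $\g{b}=\g{a}$, then $\Exp(\g{a}\ominus\g{b})=\{e\}$, and since $\Exp(\g{a})$, $\Exp(\g{c}\oplus\g{r})$, $\Exp(\g{z})$ all lie in $H$, the displayed identity gives
\[
H\cdot\Exp(\g{h}^\perp)\supseteq\Exp(\g{a})\,\Exp(\g{c}\oplus\g{r})\,\Exp(\g{z})\,\Exp(\g{g}_\alpha\ominus(\g{c}\oplus\g{r}))\,\Exp(\g{g}_{2\alpha}\ominus\g{z})=\Exp(\g{a})\cdot N=AN.
\]
If $\g{b}=0$, the factor $\Exp(\g{a}\ominus\g{b})=\Exp(\g{a})$ now sits between the factors coming from $H$ and those from $\g{h}^\perp$; by the second fact it commutes to the extreme left at the cost of rescaling the parameters inside $\Exp(\g{c}\oplus\g{r})$ and $\Exp(\g{z})$, and the same computation again yields $\Exp(\g{a})\cdot N=AN$. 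In both cases $AN=H\cdot\Exp(\g{h}^\perp)$, which is what was wanted. The one point demanding genuine care — and the closest thing to an obstacle here — is that $\Exp(\g{u})\Exp(\g{w})\ne\Exp(\g{u}\oplus\g{w})$ for arbitrary subspaces, so every product decomposition above must be justified through the explicit BCH formula and the centrality of $\g{g}_{2\alpha}$ rather than formally; beyond this bookkeeping there is no real difficulty.
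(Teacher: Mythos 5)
Your argument is correct, and it reaches the conclusion by a genuinely different route from the paper's. Both proofs reduce the lemma to surjectivity of the multiplication map onto $AN$; your $\g{h}^\perp$ coincides with the normal space $(\g{a}\ominus\g{b})\oplus\g{c}'\oplus J\g{r}\oplus(\g{g}_{2\alpha}\ominus\g{z})$ used in the paper, since $\g{g}_\alpha\ominus(\g{c}\oplus\g{r})=\g{c}'\oplus J\g{r}$. From there the paper is constructive: given $g=\Exp(cB+W+zZ)$ it exhibits explicit elements $X\in\g{h}$ and $Y\in\g{h}^\perp$, with coefficients written in terms of $\rho$, and verifies $\Exp(X)\Exp(Y)=g$ using the product formulas of \cite[Subsections 4.1.3 and 4.1.4]{BTV}. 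You instead establish the set identity $AN=H\cdot\Exp(\g{h}^\perp)$ by sliding factors, using the centrality of $\Exp(\g{g}_{2\alpha})$ in $N$, the scaling action of $\Ad(\Exp(sB))$ on $\g{g}_\alpha$ and $\g{g}_{2\alpha}$, and the terminating BCH identity in the two-step nilpotent $N$. This dispenses with all the $\rho$-bookkeeping and is arguably more transparent; what it gives up is the explicit parametrization of how the slice $\Exp(\g{h}^\perp)(o)$ meets each orbit, which the paper's computation yields as a by-product but the lemma does not require. The one step you should make precise is the ``likewise'' in your third fact: $\g{a}\oplus\g{n}$ is solvable but not nilpotent, so the identity $\Exp(sB)\Exp(U+vZ)=\Exp\bigl(sB+\lambda(s)U+\mu(s)vZ\bigr)$ does not follow from a terminating BCH series and should instead be quoted from the explicit semidirect-product structure of $A\ltimes N$ (the same \cite{BTV} formulas the paper invokes, from which one reads off $\lambda(s),\mu(s)>0$). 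Note that this identity is only needed in the case $\g{b}=0$: when $\g{b}=\g{a}$ your product already closes up as $\Exp(\g{a})\cdot N=AN$ using only computations inside $N$.
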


\begin{proof}
The Lie algebra $\g{h}$ can be identified with the tangent space to the orbit $H\cdot o$ at $o$, and then, the corresponding normal space can be identified with the orthogonal complement of $\g{h}$ in $\g{a}\oplus\g{n}$, $\nu_{o}(H\cdot o)=(\g{a}\oplus\g{n})\ominus\g{h}=
(\g{a}\ominus\g{b})\oplus\g{c}'\oplus J\g{r}\oplus(\g{g}_{2\alpha}\ominus\g{z})$,
where $\g{c}'=\g{g}_\alpha\ominus(\g{c}\oplus\g{r}\oplus J\g{r})$ is a complex subspace of $\g{g}_{\alpha}$.
We denote $\Sigma=\Exp\bigl((\g{a}\ominus\g{b})\oplus\g{c}'\oplus J\g{r}\oplus(\g{g}_{2\alpha}\ominus\g{z})\bigr)(o)$,
which is a submanifold of $AN$ since $\Exp\colon\g{a}\oplus\g{n}\to AN$ is a diffeomorphism (but not a subgroup in general). We show that $\Sigma$ intersects every orbit of the $H$-action.
In fact, it is enough to show that the smooth map
$\varphi\colon H\times\Sigma\to AN$, $(h,p)\mapsto hp$,
is surjective.

Let $g\in AN$. Since $\Exp\colon\g{a}\oplus\g{n}\to AN$ is a diffeomorphism, there exist $c$, $z\in\R$, and $W\in\g{g}_\alpha$ such that $g=\Exp(cB+W+zZ)$.
Since $\g{g}_\alpha=\g{c}\oplus\g{c}'\oplus\g{r}\oplus J\g{r}$, we write
$W=U+V+S+JT$, with $U\in\g{c}$, $V\in\g{c}'$, and $S$, $T\in\g{r}$, accordingly.

If $\g{b}=0$ we set $a=0$, $b=c$, and if $\g{b}=\g{a}$ we set $a=c$, $b=0$.
If $\g{z}=0$, we set
\begin{align*}
x&{}=0,&
y&{}=e^{-a}\rho(b)^{-1}\Bigl(\rho(c)z-\frac{1}{2}\rho\Bigl(\frac{c}{2}\Bigr)^2\langle S,T\rangle\Bigr);
\end{align*}
otherwise, if $\g{z}=\g{g}_{2\alpha}$ we set
\begin{align*}
x&{}=\rho(a)^{-1}\Bigl(\rho(c)z-\frac{1}{2}\rho\Bigl(\frac{c}{2}\Bigr)^2\langle S,T\rangle\Bigr),&
y&{}=0.
\end{align*}

Taking this into account, we define
\begin{align*}
&X=aB+\rho\Bigl(\frac{c}{2}\Bigr)\rho\Bigl(\frac{a}{2}\Bigr)^{-1}\bigl(U+S\bigr)+xZ
\in\g{h},\\
&Y=bB+e^{-a/2}\rho\Bigl(\frac{c}{2}\Bigr)\rho\Bigl(\frac{b}{2}\Bigr)^{-1}\bigl(V+JT\bigr)+yZ
\in(\g{a}\oplus\g{n})\ominus\g{h}.
\end{align*}

Using~\cite[Subsections 4.1.3 and 4.1.4]{BTV} yields
\begin{align*}
\Exp(X)\cdot\Exp(Y)
={}&\Bigl(a,\Exp_{\g{n}}\Bigl(\rho\Big(\frac{c}{2}\Bigr)\bigl(U+S\bigr)+\rho(a)xZ\Bigr)\Bigr)\\%
&{}\cdot\Bigl(b,
\Exp_{\g{n}}\Bigl(e^{-a/2}\rho\Big(\frac{c}{2}\Bigr)\bigl(V+JT\bigr)+\rho(b)yZ\Bigr)\Bigr)\\%
{}={}&\Bigl(a+b,\Exp_{\g{n}}\Bigl(
\rho\Big(\frac{c}{2}\Bigr)\bigl(U+S+V+JT\bigr)\\%
&\phantom{\Bigl(a+b,\Exp_{\g{n}}}
+\Bigl(\rho(a)x+e^a\rho(b)y
+\frac{1}{2}\rho\Bigl(\frac{c}{2}\Bigr)^2\langle J(U+S),V+JT\rangle)Z\Bigr)\Bigr)\\%
{}={}&\Bigl(c,\Exp_{\g{n}}\Bigl(
\rho\Big(\frac{c}{2}\Bigr)\bigl(U+S+V+JT\bigr)+\rho(c)z\Bigr)\Bigr)\\[1ex]%
{}={}&\Exp\bigl(cB+U+V+S+JT+zZ\bigr)=g,
\end{align*}
which shows that $\varphi$ is onto, as we wanted to prove.
\end{proof}

We can now prove the first main theorem of this paper.

\begin{proof}[Proof of Theorem~A]
Since $H\cdot g(o)$ is homogeneous and $H\subset AN$ acts by holomorphic isometries, the orbit $H\cdot g(o)$ is a CR submanifold of $\mathbb{C}H^{n}$ if and only if the tangent space $T_{g(o)}(H\cdot g(o))$ is a CR subspace of $T_{g(o)}\C H^n$. Since $H\cdot g(o)=g(g^{-1}Hg\cdot o)$, the tangent space to the orbit $H\cdot g(o)$ at $g(o)$ can be written in terms of the Lie algebra $\g{h}$ as $T_{g(o)}\bigl(H\cdot g(o)\bigr)=g_{*}\Ad(g^{-1})\g{h}$.
Since $g$ is a holomorphic isometry, it is enough to study the elements $g\in AN$ such that $\Ad(g)\g{h}$ is a CR subspace of $\g{a}\oplus\g{n}$.
By Lemma~\ref{lemma:slice} we only have to consider elements of the form $g\in \Exp\bigl((\g{a}\oplus\g{n})\ominus\g{h}\bigr)$.

By Proposition~\ref{prop:CR-o}, the Lie subalgebras $\g{h}$ we have to work with are:
\[
\g{h}\in\{\g{r}, \ \g{c}\oplus\g{r}\oplus\g{g}_{2\alpha},\  \g{a}\oplus\g{r},\  \g{a}\oplus\g{c}\oplus\g{r}\oplus\g{g}_{2\alpha}\},
\]
where $\g{r}$ is a totally real subspace of $\g{g}_{\alpha}$, and $\g{c}$ is a complex one.

(i): $\g{h}=\g{r}$, with $\g{r}$ a totally real subspace of $\g{g}_\alpha$. For $g=\Exp(bB+JT+W+yZ)\in AN$ with $b$, $y\in\R$ and $T\in\g{r}$, $W\in\g{g}_\alpha\ominus\C\g{r}$, and any $S\in\g{r}$, Lemma~\ref{lemma:Ad} yields
$\Ad(g)(S)=e^{b/2}S-e^{b/2}\rho(b/2)\langle T,S\rangle Z$.
Hence, we have
\begin{equation}\label{eq:Adg-r}
\Ad(g)\g{h}=(\g{r}\ominus\R T)\oplus\R\Bigl(T-\rho\Bigl(\frac{b}{2}\Bigr)\lVert T\rVert^2 Z\Bigr).
\end{equation}
Since $\g{r}$ is totally real and $J\g{g}_{2\alpha}=\g{a}$,
this readily implies that $\Ad(g)\g{h}$ is totally real.
Therefore all the orbits of $H$ are totally real, and since $\g{h}\subset \g{n}$, each $H$-orbit is contained in one of the leaves of the horosphere foliation induced by the Lie group $N$, from where~(\ref{th:A:r}) of Theorem~A follows.

(ii): $\g{h}=\g{c}\oplus\g{r}\oplus\g{g}_{2\alpha}$, where $\g{c}$ is complex and $\g{r}$ is totally real in $\g{g}_\alpha$. Taking $g=\Exp(bB+X+yZ)$ with $b$, $y\in\R$ and $X\in\g{g}_\alpha\ominus(\g{c}\oplus\g{r})$, and $U\in\g{c}\oplus\g{r}$ we get
\[
\Ad(g)(U+xZ)=e^{b/2}U+\Bigl(xe^b+e^{b/2}\rho\Bigl(\frac{b}{2}\Bigr)\langle JX,U\rangle\Bigr) Z.
\]
Note that $Z\in\Ad(g)(\g{h})$ (just set $U=0$, $x=e^{-b}$).
Hence, it follows that $\g{c}\oplus\g{r}\subset\Ad(g)\g{h}$, and for dimension reasons, $\Ad(g)\g{h}=\g{h}$. Thus, all the orbits of $H$ are CR-submanifolds that are congruent to each other, and since $\g{h}\subset \g{n}$, $H$-orbits are contained in the leaves of the horosphere foliation induced by $N$.
This corresponds to~(\ref{th:A:crz}) of Theorem~A.

(iii): $\g{h}=\g{a}\oplus\g{r}$, with $\g{r}$ a totally real subspace of $\g{g}_\alpha$.
We have $(\g{a}\oplus\g{n})\ominus\g{h}=J\g{r}\oplus(\g{g}_\alpha\ominus\C\g{r})\oplus\g{g}_{2\alpha}$.
Consider $g=\Exp(2JT+2W+yZ)$, where $T\in \g{r}$, $W\in\g{g}_\alpha\ominus\mathbb{C}\g{r}$, and $y\in\mathbb{R}$. For any $a\in\R$ and $S\in\g{r}$, Lemma~\ref{lemma:Ad} yields
\[
\Ad(g)(aB+S)=aB+S-a(JT+W)-(ay+2\langle T,S\rangle)Z.
\]
Then
\begin{equation}\label{eq:Adg-ar}
\Ad(g)\g{h}=\R(B-JT-W-yZ)
\oplus\R(T-2\lVert T\rVert^2 Z)\oplus(\g{r}\ominus\R T).
\end{equation}

If $T=0$, we get $\Ad(g)\g{h}=\R(B-W-yZ)\oplus\g{r}$, which is totally real since $W\in\g{g}_\alpha\ominus\C\g{r}$; in particular, $\Ad(g)\g{h}$ is CR in $\g{a}\oplus\g{n}$.

Assume $T\neq 0$.
Then, $\R\left(T-2\lVert T\rVert^2 Z\right)\oplus(\g{r}\ominus\R T)$ is totally real, and
$B-JT-W-yZ$ is complex orthogonal to $\g{r}\ominus\R T$.
On the other hand,
\[
\langle J(B-JT-W-yZ),T-2\lVert T\rVert^{2}Z \rangle
=-\lVert T\rVert^2\neq 0,
\]
which implies that $\Ad(g)\g{h}$ is not totally real.
Moreover, $J(B-JT-W-yZ)=yB+T-JW+Z$ cannot be proportional to $T-2\lVert T\rVert^2 Z$.
Hence, $\Ad(g)\g{h}$ does not contain a non-trivial complex vector subspace.
Therefore, if $T\neq 0$, $\Ad(g)\g{h}$ is not a CR subspace of $\g{a}\oplus\g{n}$.
We conclude that $\Ad(g)\g{h}$ is CR if and only if $g\in\Exp\bigl((\g{g}_\alpha\ominus\C\g{r})\oplus\g{g}_{2\alpha}\bigr)$,  and in this case $\Ad(g)\g{h}$ is actually totally real. Note also that $H\cdot o$ is a totally geodesic $\R H^k$, with $k=\dim \g{r}+1$, and hence the other $H$-orbits are equidistant to it. This proves item~(\ref{th:A:ar}) of Theorem~A.

(iv): $\g{h}=\g{a}\oplus\g{c}\oplus\g{r}\oplus\g{g}_{2\alpha}$, where $\g{c}$ is complex and $\g{r}$ is totally real in $\g{g}_\alpha$.
In view of Lemma~\ref{lemma:slice} we consider $(\g{a}\oplus\g{n})\ominus\g{h}=\g{c}'\oplus J\g{r}$, where $\g{c}'=\g{g}_\alpha\ominus(\g{c}\oplus\C\g{r})$ is a complex subspace of $\g{g}_\alpha$.
Let $g=\Exp(2JT+2W)$, with $T\in\g{r}$ and $W\in\g{c'}$.
Let $a$, $x\in\R$, $S\in\g{r}$, and $U\in\g{c}$. Using Lemma~\ref{lemma:Ad} we get
\[
\Ad(g)(aB+U+S+xZ)=aB+U+S-a(JT+W)+\bigl(x-2\langle S,T\rangle\bigr)Z.
\]
In particular, $Z=\Ad(g)(Z)\in\Ad(g)\g{h}$, and thus,
\begin{equation}\label{eq:Adg-acrz}
\Ad(g)\g{h}=\mathbb{R}(B-JT-W)\oplus\g{c}\oplus\g{r}\oplus\g{g}_{2\alpha}.
\end{equation}

If $W=0$, the maximal complex distribution of $\Ad(g)\g{h}$ is
\[
\g{m}=\Ad(g)\g{h}\cap J(\Ad(g)\g{h})=
\R(B-JT)\oplus\R(Z+T)\oplus\g{c}.
\]
Then, its orthogonal complement
\[	
\Ad(g)(\g{h})\ominus\g{m}=
(\g{r}\oplus\g{g}_{2\alpha})\ominus\R(Z+T)
=(\g{r}\ominus\R T)\oplus\R\bigl(T-\lVert T\rVert^{2} Z\bigr),
\]
is totally real. Hence $\Ad(g)\g{h}$ is a CR subspace.

Now assume $W\neq 0$.
In this case, the maximal complex subspace of $\Ad(g)\g{h}$ is
$\g{m}=\Ad(g)\g{h}\cap J(\Ad(g)\g{h})=\g{c}$, and its orthogonal complement in $\Ad(g)(\g{h})$ is $\Ad(g)(\g{h})\ominus\g{m}=\mathbb{R}(B-JT-W)\oplus\g{r}\oplus\g{g}_{2\alpha}$, which is not a totally real subspace since
$\langle J(B-JT-W),Z\rangle=1\neq 0$.
Then, $\Ad(g)\g{h}$ is not a CR submanifold when $W\neq 0$.
Altogether this proves that $\Ad(g)\g{h}$ is a CR subspace of $\g{a}\oplus\g{n}$ precisely when $g\in\Exp(J\g{r})$. Note that $\Exp (\g{h}\oplus J\g{r})\cdot o$ is a totally geodesic $\C H^k$, with $k=\dim_\C (\g{h}\oplus J\g{r})$. Then, it follows from~\cite[Theorem~A and Corollary~6.2]{DDK} that the $H$-orbits that foliate this $\C H^k$ constitute a homogeneous polar regular foliation with exactly one minimal leaf. This minimal leaf is precisely $H\cdot o$, which is called a Berndt-Br\"uck submanifold $W^{2k-\dim\g{r}}$ with totally real normal bundle in such $\C H^k$, see~\cite{BB:crelle}, \cite{BD09}. This proves~(\ref{th:A:acrz}).
\end{proof}

\section{Proof of Theorem~B}
This section is devoted to determining the congruence classes of the homogeneous CR submanifolds obtained in Theorem~A.
We first study each case separately.

\subsection*{Case~(\ref{th:A:r})}

Let $\g{h}=\g{r}$ be a totally real subspace of $\g{g}_{\alpha}$.

First of all, recall that two totally real subspaces of $\g{g}_\alpha$ are congruent by an element of $K_0\cong\mathsf{S}(\mathsf{U}(1)\mathsf{U}(n))$ if and only if both have the same dimension.
Hence, we can fix $\g{r}$ in the rest of the proof.

Let $T\in\g{r}$, $W\in\g{g}_\alpha\ominus\C\g{r}$, and $b$, $y\in\R$.
It readily follows from~\eqref{eq:Adg-r} that
\begin{equation}\label{eq:rhoJT}
\Ad\bigl(\Exp(bB+JT+W+yZ)\bigr)\g{h}
=\Ad\Bigl(\Exp\Bigl(\rho\Bigr(\frac{b}{2}\Bigr)JT\Bigr)\Bigr)\g{h}.
\end{equation}
Hence, $H\cdot\Exp(bB+JT+W+yZ)(o)$ and $H\cdot\Exp(\rho(b/2)JT)(o)$ are congruent.
Thus, in order to settle the congruence problem for case~(\ref{th:A:r}) we just have to consider elements $g\in\Exp(J\g{r})$.

\begin{lemma}\label{lemma:shape:Adg-r}
The squared norm of the mean curvature vector  $\mathcal{H}$ of the orbit $H\cdot\Exp(JT)(o)$, $T\in\g{r}$, is given by
\[
\lVert\mean\rVert^{2}=
\frac{4\lVert T\rVert^{2}+(r+(r+1)\lVert T\rVert^{2})^{2}}{4(1+\lVert T\rVert^{2})^{2}},
\]
where $r=\dim\g{r}=\dim\g{h}$.
\end{lemma}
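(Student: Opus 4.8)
The plan is to compute the second fundamental form of the orbit $H\cdot\Exp(JT)(o)$ directly from the Levi-Civita connection formula~\eqref{eq:Levi-Civita}, using the explicit description of the tangent space provided by~\eqref{eq:Adg-r}. Indeed, setting $g=\Exp(JT)$ (so $b=0$, hence $\rho(b/2)=1$), we have from~\eqref{eq:Adg-r} that
\[
\Ad(g)\g{h}=(\g{r}\ominus\R T)\oplus\R\bigl(T-\lVert T\rVert^{2}Z\bigr).
\]
Since $g$ is a holomorphic isometry, $H\cdot g(o)$ is isometrically congruent (via $g_*$) to the orbit through $o$ of the group with Lie algebra $\Ad(g^{-1})\g{h}$; equivalently, and more conveniently here, the submanifold geometry of $H\cdot g(o)$ at $g(o)$ is read off from $\Ad(g)\g{h}\subset\g{a}\oplus\g{n}\cong T_o\C H^n$, treating $\Ad(g)\g{h}$ as the tangent space of a homogeneous submanifold of $AN$. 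So the first step is to fix an orthonormal basis of $\Ad(g)\g{h}$: take an orthonormal basis $\{T_1,\dots,T_{r-1}\}$ of $\g{r}\ominus\R T$ together with the unit vector $\xi_0=(T-\lVert T\rVert^2 Z)/\sqrt{\lVert T\rVert^2+\lVert T\rVert^4}=(T-\lVert T\rVert^2 Z)/(\lVert T\rVert\sqrt{1+\lVert T\rVert^2})$.

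The second step is to evaluate $\bar\nabla_{E_i}E_i$ for each basis vector $E_i$ using~\eqref{eq:Levi-Civita}, project onto the normal space $\nu=(\g{a}\oplus\g{n})\ominus\Ad(g)\g{h}$, and sum. For $E_i=T_i\in\g{r}$ we get $\bar\nabla_{T_i}T_i=\tfrac12\lVert T_i\rVert^2 B-\tfrac12(0\cdot T_i+0+0)+\tfrac12\langle JT_i,T_i\rangle Z=\tfrac12 B$, since $\g{r}$ is totally real. For $E=\xi_0$, writing $\xi_0=c(T-\lVert T\rVert^2 Z)$ with $c=1/(\lVert T\rVert\sqrt{1+\lVert T\rVert^2})$ and plugging $U=V=cT$, $a=b=0$, $x=y=-c\lVert T\rVert^2$ into~\eqref{eq:Levi-Civita}, a short computation gives $\bar\nabla_{\xi_0}\xi_0=c^2\bigl((\tfrac12\lVert T\rVert^2+\lVert T\rVert^4)B+\lVert T\rVert^2 JT-\tfrac12\cdot 0\cdot Z\bigr)$, i.e. a multiple of $B+\tfrac{1}{1/2+\lVert T\rVert^2}JT$ — the $Z$-component vanishes because $\langle JT,T\rangle=0$. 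Then $\bar\nabla_{\xi_0}\xi_0$ already lies in $J\g{r}\oplus\g{a}\subset\nu$ (note $\g{a}\perp\Ad(g)\g{h}$ and $JT\perp T$), so the only projection to carry out is of the $r-1$ copies of $\tfrac12 B$, which are entirely normal as well. The mean curvature vector is $\mean=\tfrac{1}{r}\sum_i(\bar\nabla_{E_i}E_i)^{\perp}$, and one collects the $B$- and $JT$-components and takes the squared norm; the factor $\tfrac1r$ and the arithmetic with $c^2$ produce exactly the claimed expression $\lVert\mean\rVert^2=\bigl(4\lVert T\rVert^2+(r+(r+1)\lVert T\rVert^2)^2\bigr)/\bigl(4(1+\lVert T\rVert^2)^2\bigr)$.

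The third, mostly bookkeeping, step is to double-check that all the vectors $\bar\nabla_{E_i}E_i$ that appear are automatically normal (so that no genuine tangential projection is needed), which is immediate here since the tangential part $\Ad(g)\g{h}$ lives inside $\g{r}\oplus\g{g}_{2\alpha}$ while the curvature terms land in $\g{a}\oplus J\g{r}$, and $\g{a}\perp(\g{r}\oplus\g{g}_{2\alpha})$, $J\g{r}\perp\g{r}$, and $JT\perp(\g{r}\ominus\R T)$. The one point requiring a little care — and the closest thing to an obstacle — is correctly normalizing $\xi_0$ and keeping track of the coefficient $c^2=1/(\lVert T\rVert^2(1+\lVert T\rVert^2))$ together with the factor $\lVert T\rVert^4$ coming from the $Z$-part of $\xi_0$ entering~\eqref{eq:Levi-Civita}; a sign or normalization slip here is what would most plausibly throw off the final formula, so I would verify the special case $T=0$ (giving $\lVert\mean\rVert^2=r^2/4$, the mean curvature of a totally geodesic $\R H^r$ sitting in a horosphere, consistent with item~(\ref{th:A:r}) of Theorem~A) and the asymptotics as $\lVert T\rVert\to\infty$ (giving $\lVert\mean\rVert^2\to(r+1)^2/4$) as sanity checks.
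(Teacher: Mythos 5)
Your computation follows essentially the same route as the paper's proof: take the explicit orthonormal basis of the tangent space coming from~\eqref{eq:Adg-r}, evaluate $\bar{\nabla}_{E_i}E_i$ with~\eqref{eq:Levi-Civita}, observe that these vectors are automatically normal, sum, and take the squared norm; your intermediate values for $\bar{\nabla}_S S$ and $\bar{\nabla}_{\xi_0}\xi_0$ agree with the paper's (up to the sign issue noted below) and do assemble into the stated formula. Two points need fixing, however. First, the normalization: the formula in the Lemma is for the unnormalized mean curvature vector $\mean=\sum_i\II(E_i,E_i)$, a convention the paper states explicitly in its proof. If you insert the factor $\tfrac1r$ as written, you obtain $1/r^2$ times the claimed expression; note that your own sanity checks ($\lVert\mean\rVert^2=r^2/4$ at $T=0$ and the limit $(r+1)^2/4$ as $\lVert T\rVert\to\infty$) already presuppose the unnormalized convention, so the $\tfrac1r$ must be dropped. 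Second, the tangent space of $H\cdot g(o)$ at $g(o)$ is $g_*\Ad(g^{-1})\g{h}$, not $g_*\Ad(g)\g{h}$; by working with $\Ad(g)\g{h}=(\g{r}\ominus\R T)\oplus\R(T-\lVert T\rVert^2 Z)$ you are in fact computing the invariants of the orbit $H\cdot\Exp(-JT)(o)$, whereas the paper uses $\Ad(g^{-1})\g{h}=(\g{r}\ominus\R T)\oplus\R(T+\lVert T\rVert^2 Z)$. This slip is harmless here because the two computations differ only in the sign of the $JT$-component of $\mean$, so $\lVert\mean\rVert^2$ is unchanged; but the ``equivalence'' you assert should be justified by this evenness in $T$ (or by replacing $T$ with $-T$ via an isometry) rather than taken for granted, since in general $\Ad(g)\g{h}$ and $\Ad(g^{-1})\g{h}$ describe different orbits.
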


\begin{proof}
Let $g=\Exp(JT)$. Since $H\cdot g(o)$ is congruent to $g^{-1}Hg\cdot o$, we calculate the mean curvature of the latter.  It suffices to do so at $o$ by homogeneity.
It follows from~\eqref{eq:Adg-r} that the normal space of $g^{-1}Hg\cdot o$ at $o$, which can be identified with the orthogonal complement of $\Ad(g^{-1})\g{h}=\Ad(\Exp(-JT))\g{h}$ in $\g{a}\oplus\g{n}$, is
\begin{equation}\label{eq:Adg-r-normal}
\nu_{o}(g^{-1}Hg\cdot o)
=\g{a}\oplus J\g{r}\oplus(\g{g}_\alpha\ominus\C\g{r})\oplus\mathbb{R}(-T+Z).
\end{equation}

Let $S\in\g{r}\ominus\R T$ with $\lVert S\rVert=1$, and
$X=\frac{T+\lVert T\rVert^{2}Z}{\lVert T\rVert\sqrt{1+\lVert T\rVert^{2}}}$ if $T\neq 0$.
Using the formula for the Levi-Civita connection of left-invariant vector fields~\eqref{eq:Levi-Civita}, it follows that
\begin{equation}\label{eq:Adg-r-nabla}
\begin{aligned}
\bar{\nabla}_{S}S
&{}=\frac{1}{2}B, &
\bar{\nabla}_{X}X
&{}=\frac{1+2\lVert T\rVert^{2}}{2(1+\lVert T\rVert^{2})}B-\frac{1}{1+\lVert T\rVert^{2}}JT.
\end{aligned}
\end{equation}

Recall that, given an orthonormal basis $\{E_{i}\}$ of $T_{o}(g^{-1}H g\cdot o)$, the mean curvature can be computed as $\mean=\sum_{i}\II(E_{i},E_{i})$,
where $\II$ denotes the second fundamental form.
In this case, using~\eqref{eq:Adg-r-nabla} and projecting onto the normal space according to~\eqref{eq:Adg-r-normal}, the mean curvature of $g^{-1}Hg \cdot o$ is given by
\[
\mean
=\Bigl(\frac{r-1}{2}+\frac{1+2\lVert T\rVert^{2}}{2(1+\lVert T\rVert^{2})}\Bigr)B
-\frac{1}{1+\lVert T\rVert^{2}}JT.
\]
The result follows after calculating the squared norm of this vector.
\end{proof}

In order to finish the proof in this case, let $g_{1}=\Exp(JT_{1})$, $g_{2}=\Exp(JT_{2})$ with $T_{1}$, $T_{2}\in \g{r}$.
We investigate whether the orbits $H\cdot g_{1}(o)$ and $H\cdot g_{2}(o)$ are congruent.

Assume first $\lVert T_{1}\rVert=\lVert T_{2}\rVert$.
Since the connected component of the identity of the normalizer of $\g{r}$ in $K_0$, which is given by $N_{K_{0}}^{0}(\g{r})\cong \SO(\dim\g{r})\times \U(n-1-\dim\g{r})$, acts transitively on the spheres of $\g{r}$ centered at the origin, there exists an element $k\in N_{K_{0}}^{0}(\g{r})$ satisfying $\Ad(k)(T_{1})=T_{2}$.
Since $k\in N_{K_{0}}^{0}(\g{r})$ and $K_{0}\cong \U(n-1)$,
then $k\in N_{K_{0}}(\g{g}_\alpha\ominus \mathbb{C}\g{r})$.
Considering these facts, it follows that
\begin{align*}
\Ad(k)\Ad(g_{1}^{-1})\g{h}
&{}=\Ad(k)\bigl((\g{r}\ominus\R T_{1})\oplus\R(T_{1}+\lVert T_{1}\rVert^{2}Z)\bigr)\\ &{}=(\g{r}\ominus\R T_{2})\ominus\R (T_{2}+\lVert T_{2}\rVert^{2}Z)
=\Ad(g_{2}^{-1})\g{h}.
\end{align*}
Since $k$ fixes $o\in\mathbb{C}H^{n}$ and normalizes $\g{a}\oplus\g{n}$, it follows that $k( g_{1}^{-1} H g_{1}\cdot o)=g_{2}^{-1} H g_{2}\cdot o$.
This shows that $H\cdot g_{1}(o)$ is congruent to $H\cdot g_{2}(o)$.

Conversely, in view of Lemma~\ref{lemma:shape:Adg-r} it is enough to show that $h\colon[0,\infty)\to[0,\infty)$, given by ${t\mapsto \frac{4t+(r+(r+1)t)^{2}}{4(1+t)^2}}$, is injective.
This follows simply from $h'(t)=\frac{2+r+(r-1)t}{2(1+t)^3}>0$, which implies that $h$ is strictly increasing.
Therefore, if $\lVert T_1\rVert\neq\lVert T_2\rVert$, $H\cdot g_1(o)$ and $H\cdot g_2(o)$ are not congruent.

All in all, and taking \eqref{eq:rhoJT} into account, this means that the orbit $H\cdot\Exp(b_1 B+JT_1+W_1+y_1 Z)(o)$ is congruent to $H\cdot\Exp(b_2 B+JT_2+W_2+y_2 Z)(o)$, with $b_i$, $y_i\in\R$, $T_i\in\g{r}$, $W_i\in\g{g}_\alpha\ominus\C\g{r}$, $i\in\{1,2\}$, if and only if $\rho(b_2/2)\lVert T_1\rVert=\rho(b_1/2)\lVert T_2\rVert$.
This concludes the proof of case~(\ref{th:B:r}) of Theorem~B.

%

\subsection*{Case~(\ref{th:A:crz})}

Let $\g{h}=\g{c}\oplus\g{r}\oplus\g{g}_{2\alpha}$, with $\g{c}$ complex and $\g{r}$ totally real in $\g{g}_\alpha$.

It follows from Theorem~A(\ref{th:A:crz}) that all the orbits of $H$ are congruent to each other.
Let now $H_{1}$ and $H_{2}$ be connected Lie subgroups of $G$ with Lie algebras $\g{h}_{i}=\g{c}_{i}\oplus\g{r}_{i}\oplus\g{g}_{2\alpha}$, where $\g{c}_i$ is complex and $\g{r}_i$ is totally real in $\g{g}_\alpha$, $i\in\{1,2\}$.
Then, since isometries of $\SU(1,n)$ are holomorphic, it follows that $\g{h}_1$ and $\g{h}_2$ are conjugate if and only if $\dim\g{c}_1=\dim\g{c}_2$ and $\dim\g{r}_1=\dim\g{r}_2$.
Hence, the orbits of $H_1$ and $H_2$ are congruent if and only if $\dim\g{c}_1=\dim\g{c}_2$ and $\dim\g{r}_1=\dim\g{r}_2$.

\begin{lemma}\label{lemma:shape:Adg-crz}
The squared norm of the mean curvature of any orbit of $H$ is
\[
\lVert\mean\rVert^2=\frac{\bigl(2+\dim(\g{c}\oplus\g{r})\bigr)^2}{4}.
\]
\end{lemma}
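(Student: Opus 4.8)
The plan is to invoke Theorem~A(\ref{th:A:crz}), which says that all orbits of $H$ are mutually congruent; hence it suffices to compute the mean curvature of $H\cdot o$, and by homogeneity of this orbit it is enough to do the computation at the point $o$ itself. At $o$ the tangent space of $H\cdot o$ is $\g{h}=\g{c}\oplus\g{r}\oplus\g{g}_{2\alpha}$. Since $\g{g}_{2\alpha}\subset\g{h}$, the normal space is $\nu_o(H\cdot o)=\g{a}\oplus\bigl(\g{g}_\alpha\ominus(\g{c}\oplus\g{r})\bigr)$; in particular the unit vector $B\in\g{a}$ is normal to the orbit.

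First I would fix an orthonormal basis $\{U_1,\dots,U_d,Z\}$ of $\g{h}$, where $\{U_1,\dots,U_d\}$ is an orthonormal basis of $\g{c}\oplus\g{r}$, so that $d=\dim(\g{c}\oplus\g{r})$, and $Z$ is the unit generator of $\g{g}_{2\alpha}$ (recall $\langle Z,Z\rangle=1$). Then the mean curvature vector of $H\cdot o$ at $o$ is
\[
\mean=\sum_{i=1}^d\II(U_i,U_i)+\II(Z,Z)=\sum_{i=1}^d\bigl(\bar{\nabla}_{U_i}U_i\bigr)^\perp+\bigl(\bar{\nabla}_Z Z\bigr)^\perp,
\]
the normal projections being taken with respect to the decomposition above.

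The next step is a direct application of the Levi-Civita formula~\eqref{eq:Levi-Civita}. For a unit vector $U\in\g{c}\oplus\g{r}\subset\g{g}_\alpha$ it gives $\bar{\nabla}_U U=\frac12\langle U,U\rangle B+\frac12\langle JU,U\rangle Z=\frac12 B$, where we used $\langle JU,U\rangle=0$; and for $Z$ it gives $\bar{\nabla}_Z Z=B$. In both cases the result already lies in $\nu_o(H\cdot o)$, so no projection is needed. Adding up, $\mean=\frac{d}{2}B+B=\frac{d+2}{2}B$, and hence $\lVert\mean\rVert^2=\frac{(d+2)^2}{4}\langle B,B\rangle=\frac{\bigl(2+\dim(\g{c}\oplus\g{r})\bigr)^2}{4}$, which is the claimed expression.

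I do not expect any genuine obstacle in this argument; the only points that require care are the normalizations $\langle B,B\rangle=\langle Z,Z\rangle=1$ fixed in Section~\ref{sec:preliminaries}, and the observation that, by~\eqref{eq:Levi-Civita}, $\bar{\nabla}_X X$ has vanishing $\g{g}_\alpha$-component whenever $X\in\g{g}_\alpha$ or $X\in\g{g}_{2\alpha}$, so one never needs to sort out which vectors of $\g{g}_\alpha$ are tangent to the orbit and which are normal.
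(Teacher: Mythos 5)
Your argument is correct, and it reaches the same intermediate formula $\mean=\tfrac{1}{2}\bigl(2+\dim(\g{c}\oplus\g{r})\bigr)B$ as the paper, but by a different route: the paper simply quotes \cite[Corollary 6.2]{DDK} for the mean curvature vector and then takes the squared norm, whereas you carry out the computation from scratch using the left-invariant frame and the Levi-Civita formula~\eqref{eq:Levi-Civita}. Your computation is sound: since $\g{g}_{2\alpha}\subset\g{h}$, the normal space is indeed $\g{a}\oplus\bigl(\g{g}_\alpha\ominus(\g{c}\oplus\g{r})\bigr)$, and the values $\bar{\nabla}_U U=\tfrac12 B$ for a unit $U\in\g{g}_\alpha$ and $\bar{\nabla}_Z Z=B$ follow directly from~\eqref{eq:Levi-Civita} with the normalizations $\langle B,B\rangle=\langle Z,Z\rangle=1$; both vectors already lie in $\g{a}\subset\nu_o(H\cdot o)$, so the projections are trivial and $\mean=\tfrac{d+2}{2}B$ with $d=\dim(\g{c}\oplus\g{r})$. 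What your approach buys is self-containedness and consistency with the method the paper itself uses in the analogous Lemmas~\ref{lemma:shape:Adg-r} and~\ref{lemma:shape:Adg-ar}; what the paper's citation buys is brevity and the additional geometric information packaged in~\cite{DDK} (the orbit is part of a homogeneous foliation whose mean curvature is computed there once and for all). Also note that your appeal to Theorem~A(\ref{th:A:crz}) to reduce to the orbit through $o$ matches the paper's implicit use of the fact that all $H$-orbits are congruent in this case.
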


\begin{proof}
From~\cite[Corollary 6.2]{DDK}, we have $2\mean=\bigl(2+\dim(\g{c}\oplus\g{r})\bigr)B$, and the result follows taking squared norm.
\end{proof}

\subsection*{Case~(\ref{th:A:ar})}

Let $\g{h}=\g{a}\oplus\g{r}$, where $\g{r}$ is a totally real subspace of $\g{g}_{\alpha}$.

Since two totally real subspaces of $\g{g}_\alpha$ are conjugate if and only if they have the same dimension, we can fix $\g{r}$ from now on.
From Theorem~A we just have to consider orbits of the form $H\cdot g(o)$, with $g\in\Exp\bigl((\g{g}_\alpha\ominus\C\g{r})\oplus\g{g}_{2\alpha}\bigr)$.
We define $r=\dim\g{r}$.

\begin{lemma}\label{lemma:shape:Adg-ar}
The squared norms of the mean curvature vector and of the second fundamental form of
$H\cdot\Exp(2W+yZ)(o)$, $W\in\g{g}_\alpha\ominus\C\g{r}$, $y\in\R$, are given by
\begin{align*}
\lVert\mean\rVert^{2}
&{}=\frac{(1+r)^2\lVert W\rVert^4+(2+r)^2 y^2(1+y^2)
+\lVert W\rVert^2\bigl(1+8y^2+r^2(1+2y^2)+2r(1+3y^2)\bigr)}
{4(1+y^2+\lVert W\rVert^2)^2},\\
\lVert\II\rVert^2
&{}=\frac{(1+r)\lVert W\rVert^4+(4+3r)y^2(1+y^2)+\lVert W\rVert^2\bigl(1+r+4y^2(2+r)\bigr)}
{4(1+y^2+\lVert W\rVert^2)^2}.
\end{align*}
\end{lemma}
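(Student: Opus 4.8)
The plan is to follow the strategy of Lemma~\ref{lemma:shape:Adg-r}. Since $H\cdot\Exp(2W+yZ)(o)$ is congruent, via the isometry $g^{-1}$ with $g=\Exp(2W+yZ)$, to the orbit $g^{-1}Hg\cdot o$, and the latter is homogeneous under the subgroup $g^{-1}Hg\subset AN$ acting on $AN\cong\C H^n$ by left translations (which are isometries), it suffices to compute its second fundamental form at $o$. By Lemma~\ref{lemma:Ad} applied to $g^{-1}=\Exp(-2W-yZ)$ (equivalently, by setting $T=0$ in~\eqref{eq:Adg-ar} and changing signs), the tangent space at $o$ is $V=\Ad(g^{-1})\g{h}=\R\xi\oplus\g{r}$ with $\xi=B+W+yZ$. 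Since $\g{r}$ is totally real, $\g{g}_\alpha\ominus\C\g{r}$ is a complex subspace and $W\in\g{g}_\alpha\ominus\C\g{r}$, the vectors $B$, $W$, $JW$, $Z$ are mutually orthogonal, $JW\in\g{g}_\alpha\ominus\C\g{r}$, $J\g{r}\perp V$, and $\langle\xi,\xi\rangle=c^2:=1+\lVert W\rVert^2+y^2$. Fix an orthonormal basis $\{S_1,\dots,S_r\}$ of $\g{r}$; then $\{e_0=\xi/c,S_1,\dots,S_r\}$ is an orthonormal basis of $V$.

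Next I would compute the needed covariant derivatives of the corresponding left-invariant vector fields using~\eqref{eq:Levi-Civita}; this is legitimate because $g^{-1}Hg$ is a connected subgroup of $AN$, so its orbit through $o$ is $g^{-1}Hg$ itself and the left-invariant extensions of elements of $V$ are tangent to it. Using that $\g{r}$ is totally real and $W\perp\C\g{r}$, one obtains
\begin{align*}
\bar\nabla_{S_i}S_j&=\tfrac12\delta_{ij}B, &
\bar\nabla_{S_i}\xi&=-\tfrac12 S_i-\tfrac{y}{2}JS_i, &
\bar\nabla_\xi\xi&=\bigl(\tfrac12\lVert W\rVert^2+y^2\bigr)B-\tfrac12 W-yJW-yZ.
\end{align*}
Projecting orthogonally onto the normal space $(\g{a}\oplus\g{n})\ominus V$, one checks that $\bar\nabla_\xi\xi$ is already normal (it is orthogonal both to $\xi$ and to $\g{r}$), that the normal component of $\bar\nabla_{S_i}\xi$ is $-\tfrac{y}{2}JS_i$, and that the normal component of $B$ is $B-c^{-2}\xi$. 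Hence the second fundamental form of $g^{-1}Hg\cdot o$ at $o$ is determined by $\II(e_0,e_0)=c^{-2}\bar\nabla_\xi\xi$, $\II(e_0,S_i)=-\tfrac{y}{2c}JS_i$, and $\II(S_i,S_j)=\tfrac12\delta_{ij}\bigl(B-c^{-2}\xi\bigr)$.

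Finally I would assemble $\mean=\II(e_0,e_0)+\sum_{i=1}^r\II(S_i,S_i)$, collect the coefficients of the mutually orthogonal vectors $B$, $W$, $JW$, $Z$, and compute $\lVert\mean\rVert^2$ using $\lVert B\rVert=\lVert Z\rVert=1$ and $\lVert JW\rVert=\lVert W\rVert$; similarly $\lVert\II\rVert^2=\lVert\II(e_0,e_0)\rVert^2+2\sum_{i=1}^r\lVert\II(e_0,S_i)\rVert^2+\sum_{i=1}^r\lVert\II(S_i,S_i)\rVert^2$, where $\lVert\II(e_0,S_i)\rVert^2=y^2/(4c^2)$ and $\lVert\II(S_i,S_i)\rVert^2=\tfrac14(1-c^{-2})$. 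Substituting $c^2=1+\lVert W\rVert^2+y^2$ and simplifying yields the two stated formulas; the case $W=0$ is included, the $W$- and $JW$-terms simply dropping out. The only real effort is this last round of algebraic bookkeeping — everything else is a direct application of~\eqref{eq:Levi-Civita} together with the orthogonality relations among $B$, $W$, $JW$, $Z$, $\g{r}$ and $J\g{r}$.
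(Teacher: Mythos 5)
Your proposal is correct and follows essentially the same route as the paper: identify the tangent space with $\Ad(g^{-1})\g{h}=\R(B+W+yZ)\oplus\g{r}$, compute the second fundamental form from~\eqref{eq:Levi-Civita} by projecting onto the normal space, and sum over an orthonormal basis; your intermediate values $\II(e_0,e_0)$, $\II(e_0,S_i)$ and $\II(S_i,S_j)$ agree exactly with those in the paper. The only cosmetic difference is that the paper projects $B$ onto an explicit orthonormal normal frame $\{\xi_1,\xi_2\}$, whereas you subtract the tangential component $c^{-2}\xi$ directly.
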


\begin{proof}
Let $g=\Exp(2W+yZ)$.
Recall from~\eqref{eq:Adg-ar} (with $T=0$) that $T_{g(o)}\bigl(H\cdot g(o)\bigr)$ is identified with $\Ad(g^{-1})\g{h}=\Ad(\Exp(-2W-yZ))\g{h}=\R(B+W+yZ)\oplus\g{r}$. We define
\begin{equation}\label{eq:X}
\begin{aligned}
X
&{}=\frac{B+W+yZ}{\sqrt{1+y^2+\lVert W\rVert^2}},&
\xi_1
&{}=\frac{-yB+Z}{\sqrt{1+y^2}},&
\xi_2
&{}=\frac{\lVert W\rVert^2 B-(1+y^2)W+y\lVert W\rVert^2 Z}
{\lVert W\rVert\sqrt{(1+y^2)(1+y^2+\lVert W\rVert^2)}}.
\end{aligned}
\end{equation}
Then, $X$, $\xi_1$, and $\xi_2$ (if $W\neq 0$) are mutually orthogonal unit vectors of $\g{a}\oplus\g{n}$.
Furthermore, we have $\Ad(g^{-1})\g{h}=\R X\oplus\g{r}$, and the normal space of $H\cdot g(o)$ can be identified with the direct sum
$\nu_{g(o)}\bigl(H\cdot g(o)\bigr)=
\R\xi_1\oplus\R\xi_2\oplus J\g{r}\oplus(\g{g}_\alpha\ominus\bigl(\C\g{r}\oplus\R W)\bigr)$.

Let $S$, $T\in\g{r}$.
Using the formula for the Levi-Civita connection~\eqref{eq:Levi-Civita} for left-invariant vector fields of $AN$,
and taking the orthogonal projection onto $\nu_{g(o)}\bigl(H\cdot g(o)\bigr)$ we get
\begin{align*}
\II(X,X)
&{}=(\bar{\nabla}_X X)^\perp
=\frac{1}{1+y^2+\lVert W\rVert^2}
\Bigl(\Bigl(y^2+\frac{1}{2}\lVert W\rVert^2\Bigr)B-\frac{1}{2}W-yZ-yJW\Bigr),\\[1ex]
\II(X,S)
&{}=(\bar{\nabla}_S X)^\perp
=-\frac{y}{2\sqrt{1+y^2+\lVert W\rVert^2}}JS,\\
\II(S,T)
&{}=\frac{\langle S,T\rangle}{2}\bigl(\langle B,\xi_1\rangle\xi_1+\langle B,\xi_2\rangle\xi_2\bigr)
=\frac{\langle S,T\rangle}{2(1+y^2+\lVert W\rVert^2)}\bigl((y^2+\lVert W\rVert^2)B-W-yZ\bigr).
\end{align*}

The squared norms of the mean curvature and of the second fundamental form are calculated as $\lVert\mean\rVert^2=\sum_{i,j}\langle\II(E_i,E_i),\II(E_j,E_j)\rangle$ and $\lVert\II\rVert^2=\sum_{i,j}\lVert\II(E_i,E_j)\rVert^2$ with respect to an orthonormal basis $\{E_i\}$ of the tangent space.
The result follows after substitution and some calculations.
\end{proof}

\begin{corollary}\label{cor:invariants}
We have
\begin{align*}
\lVert\mean\rVert^2-\lVert\II\rVert^2
&{}=\frac{r(1+r)(y^2+\lVert W\rVert^2)}{4(1+y^2+\lVert W\rVert^2)},\\
(r+1)\lVert\II\rVert^2-\lVert\mean\rVert^2
&{}=\frac{ry^2\bigl((3+2r)(1+y^2)+2(3+r)\lVert W\rVert^2\bigr)}{4(1+y^2+\lVert W\rVert^2)^2}.
\end{align*}
\end{corollary}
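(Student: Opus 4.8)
The plan is to obtain both identities by a direct algebraic simplification of the two formulas for $\lVert\mean\rVert^2$ and $\lVert\II\rVert^2$ furnished by Lemma~\ref{lemma:shape:Adg-ar}. Since those expressions already carry the common denominator $4(1+y^2+\lVert W\rVert^2)^2$, in each case it suffices to manipulate their numerators, regarded as polynomials in the two nonnegative real variables $\lVert W\rVert^2$ and $y^2$ and in the integer $r=\dim\g{r}$.

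For the first identity, I would subtract the numerator of $\lVert\II\rVert^2$ from that of $\lVert\mean\rVert^2$ and collect the coefficients of the monomials $\lVert W\rVert^4$, $y^2(1+y^2)$, and $\lVert W\rVert^2$. These turn out to be $(1+r)^2-(1+r)=r(1+r)$, then $(2+r)^2-(4+3r)=r(1+r)$, and finally $r^2(1+2y^2)+r(1+2y^2)=r(1+r)(1+2y^2)$, respectively, so the difference of numerators equals
\[
r(1+r)\bigl(\lVert W\rVert^4+(1+2y^2)\lVert W\rVert^2+y^2(1+y^2)\bigr).
\]
The only non-mechanical step is recognizing the factorization
\[
\lVert W\rVert^4+(1+2y^2)\lVert W\rVert^2+y^2(1+y^2)=(y^2+\lVert W\rVert^2)(1+y^2+\lVert W\rVert^2),
\]
after which one power of $1+y^2+\lVert W\rVert^2$ cancels against the denominator and the claimed expression for $\lVert\mean\rVert^2-\lVert\II\rVert^2$ follows at once.

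For the second identity I would proceed in the same way: multiply the numerator of $\lVert\II\rVert^2$ by $(r+1)$, subtract the numerator of $\lVert\mean\rVert^2$, and collect terms. The $\lVert W\rVert^4$ contributions cancel identically (both equal $(1+r)^2\lVert W\rVert^4$); the coefficient of $y^2(1+y^2)$ reduces to $(r+1)(4+3r)-(2+r)^2=r(3+2r)$; and the coefficient of $\lVert W\rVert^2$ reduces to $2r(3+r)y^2$. Factoring $ry^2$ out of the surviving numerator $r(3+2r)y^2(1+y^2)+2r(3+r)y^2\lVert W\rVert^2$ and keeping the unchanged denominator $4(1+y^2+\lVert W\rVert^2)^2$ yields exactly the stated formula for $(r+1)\lVert\II\rVert^2-\lVert\mean\rVert^2$.

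Beyond careful bookkeeping there is no real obstacle; the only subtlety is the quadratic factorization used in the first identity, which is precisely what makes that quotient collapse to a single power of $1+y^2+\lVert W\rVert^2$ in the denominator. As a consistency check one may set $y=0$: the second expression then vanishes and the first becomes $r(1+r)\lVert W\rVert^2/\bigl(4(1+\lVert W\rVert^2)\bigr)$, which agrees with the fact that, by the second fundamental form computed in the proof of Lemma~\ref{lemma:shape:Adg-ar}, the orbit $H\cdot\Exp(2W)(o)$ is totally umbilical when $y=0$, so that $\lVert\mean\rVert^2=(r+1)\lVert\II\rVert^2$ there.
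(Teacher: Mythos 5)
Your computation is correct and is exactly what the paper intends: the corollary is stated without proof as an immediate algebraic consequence of Lemma~\ref{lemma:shape:Adg-ar}, and your collection of coefficients, the factorization $\lVert W\rVert^4+(1+2y^2)\lVert W\rVert^2+y^2(1+y^2)=(y^2+\lVert W\rVert^2)(1+y^2+\lVert W\rVert^2)$, and the final expressions all check out. The umbilicity consistency check at $y=0$ is also valid.
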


Let $g_{1}=\Exp(2W_{1}+y_{1}Z)$, $g_{2}=\Exp(2W_{2}+y_{2}Z)$, with $W_{i}\in\g{g}_\alpha\ominus\C\g{r}$ and $y_{i}\in\R$, $i\in\{1,2\}$.
We show that, if $H\cdot g_1(o)$ is congruent to $H\cdot g_2(o)$, then $\lVert W_1\rVert=\lVert W_2\rVert$ and $\lvert y_1\rvert=\lvert y_2\rvert$.

In fact, if $r\geq 1$, taking into account Corollary~\ref{cor:invariants}, the previous claim follows from

\begin{lemma}
The function $F\colon[0,+\infty)\times[0,+\infty)\to [0,+\infty)\times [0,+\infty)$ defined by
\[
F(z,w)=\left(\frac{z+w}{1+z+w},\frac{z\bigl(a(1+z)+(a+3)w\bigr)}{(1+z+w)^2}\right),
\]
where $a\geq 5$, is injective.
\end{lemma}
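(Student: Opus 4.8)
The plan is to show that $F$ is injective by recovering $(z,w)$ uniquely from $F(z,w)=(p,q)$. First I would extract $z+w$ from the first coordinate: the map $s\mapsto s/(1+s)$ is a strictly increasing bijection from $[0,\infty)$ to $[0,1)$, so from $p=\frac{z+w}{1+z+w}$ we recover $s:=z+w=\frac{p}{1-p}$ uniquely. Once $s$ is known, the denominator $(1+z+w)^2=(1+s)^2$ is a fixed positive constant, so the second coordinate determines the numerator $N:=q(1+s)^2=z\bigl(a(1+z)+(a+3)w\bigr)$. Substituting $w=s-z$ turns $N$ into a function of the single variable $z$ (with $s$, $a$ fixed parameters), namely $N=z\bigl(a(1+z)+(a+3)(s-z)\bigr)=z\bigl(a+(a+3)s-3z\bigr)$. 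So injectivity of $F$ reduces to showing that, for each fixed $s\ge 0$ and $a\ge 5$, the quadratic $\phi(z)=z\bigl(a+(a+3)s-3z\bigr)=-3z^2+\bigl(a+(a+3)s\bigr)z$ is injective on the relevant domain $z\in[0,s]$ (which is where $w=s-z\ge 0$).

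The quadratic $\phi$ opens downward with vertex at $z^*=\frac{a+(a+3)s}{6}$, and it is strictly increasing on $[0,z^*]$. Hence the key step is to verify $z^*\ge s$, i.e.\ that the vertex lies to the right of the admissible interval $[0,s]$, so that $\phi$ is strictly increasing on all of $[0,s]$ and therefore injective there. The inequality $z^*\ge s$ reads $a+(a+3)s\ge 6s$, i.e.\ $a\ge (3-a)s+\,$wait, rearranging: $a+(a+3)s-6s=a+(a-3)s\ge 0$, which holds for all $s\ge 0$ as soon as $a\ge 3$; in particular for $a\ge 5$. Thus $\phi$ is strictly increasing on $[0,s]$, so $N$ determines $z$ uniquely, and then $w=s-z$ is determined as well. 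This establishes injectivity of $F$.

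I expect the only mild subtlety to be bookkeeping about domains: one must be careful that the relevant $z$-interval is exactly $[0,s]$ (equivalently $\{(z,w):z,w\ge 0,\ z+w=s\}$), and that the hypothesis $a\ge 5$ (or even $a\ge 3$) is precisely what pushes the vertex of the downward parabola past the right endpoint, preventing the two-to-one behavior a quadratic would otherwise exhibit. Everything else is a routine chain of strictly monotone one-variable reductions, so there is no real obstacle; the argument is essentially ``solve for $z+w$ first, then solve for $z$ using monotonicity.'' (In the application, $a=r+2\ge 3$, and the lemma's hypothesis $a\ge 5$ corresponds to $r\ge 3$, which is more than enough; the cases $r\in\{1,2\}$ of Case~(\ref{th:A:ar}) would be handled separately or are covered by the weaker bound $a\ge 3$.)
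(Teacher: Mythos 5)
Your argument is correct, and it is essentially the paper's argument in a cleaner wrapper. The paper solves $F(z,w)=(c_1,c_2)$ explicitly via the quadratic formula, obtaining two candidate pairs $(z,w)$, and discards one because its $w$-component is negative (using $3c_1-a<0$, which needs only $a\ge 3$ despite the stated bound $a\ge 5$). You instead first recover $s=z+w$ from the strictly monotone first coordinate and then observe that the numerator of the second coordinate becomes the downward parabola $\phi(z)=-3z^2+\bigl(a+(a+3)s\bigr)z$, which is injective on $[0,s]$ because its vertex $z^*=\bigl(a+(a+3)s\bigr)/6$ satisfies $z^*\ge s$ exactly when $a+(a-3)s\ge 0$. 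These are the same exclusion in disguise: the second root of $\phi(z)=N$ is the reflection of the first about $z^*$, hence lies beyond $s$ and forces $w=s-z<0$, which is precisely the root the paper throws away. Your packaging has the advantage of making visible that $a\ge 3$ suffices and of avoiding the explicit discriminant bookkeeping. One small correction to your closing parenthetical: in the application (Corollary~\ref{cor:invariants}) one has $a=2r+3$, not $r+2$, so the hypothesis $a\ge 5$ corresponds exactly to $r\ge 1$, which is the case being treated there; no separate handling of small $r$ is needed. This slip is outside the lemma itself and does not affect your proof.
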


\begin{proof}
Let $(c_1,c_2)\in[0,+\infty)\times[0,+\infty)$. We have to check whether $F^{-1}(c_1,c_2)$ has at most one element.
In fact, there are two solutions to the equation $F(z,w)=(c_1,c_2)$ which are
\[
(z,w)=\Bigl(\frac{a+3c_1\pm\sqrt{(a+3c_1)^2-12c_2}}{6(1-c_1)}, \frac{3c_1-a\mp\sqrt{(a+3c_1)^2-12c_2}}{6(1-c_1)}\Bigr).
\]
Observe that we need $0\leq c_1<1$ for the first component to be non-negative, whereas $(a+3c_1)^2-12c_2\geq 0$ so that there are real solutions.
Since $3c_1-a\leq -2<0$, the first possibility would give a negative solution for $w$, which is not allowed.
Then $F^{-1}(c_1,c_2)$ has at most one element, and $F$ is injective.
\end{proof}

Now we assume $r=0$, that is, $\g{h}=\g{a}$.
Thus, we have to study the congruence classes of orbits of the $1$-dimensional Lie group $A$ appearing in the Iwasawa decomposition of~$\SU(1,n)$.

Recall that $A\cdot o$ is a geodesic.
Let $\gamma\colon\R\to\C H^n$ be a unit speed parametrization of $A\cdot o$, and assume $\lim_{t\to\infty}\gamma(t)=x$, the point at infinity determined by $\g{a}$ and the fact that $\alpha$ is a positive root.
If $A\cdot g(o)$, $g\in\Exp(\g{g}_\alpha\oplus\g{g}_{2\alpha})$, is another orbit of $A$, then it can be parametrized as $\beta(t)=\exp_{\gamma(t)}(r\xi_{\gamma(t)})$, where $r>0$ is a constant (the distance to $A\cdot o$), and $\xi$ is an equivariant normal vector field along $A\cdot o$. 
Now we apply the law of cosines~\cite[Corollary~1.4.4(3)]{Eberlein} to the points $o$, $\gamma(t)$ and $\beta(t)$.
Observe that $\lim_{t\to\infty}d(o,\gamma(t))=\infty$, but $d(\gamma(t),\beta(t))$ is bounded because $A\cdot o$ and $A\cdot g(o)$ are equidistant.
Hence, the angle $\sphericalangle_o(\gamma(t),\beta(t))$ subtended from $o$ between $\gamma(t)$ and $\beta(t)$ approaches $0$ as $t\to\infty$.
According to the definition of the cone topology of $\C H^n\cup\C H^n(\infty)$ (see for example~\cite[Proposition~1.7.6]{Eberlein}), we the conclude that $\lim_{t\to\infty}\beta(t)=\lim_{t\to\infty}\gamma(t)=x$.
An analogous argument shows that $\lim_{t\to-\infty}\beta(t)=\lim_{t\to-\infty}\gamma(t)=-x$, the other point at infinity of the geodesic $A\cdot o$.

Let $g_i=\Exp(2W_i+y_iZ)\in\Exp(\g{g}_\alpha\oplus\g{g}_{2\alpha})$, with $W_i\in\g{g}_\alpha$, $y_i\in\R$, $i\in\{1,2\}$. According to~\eqref{eq:Adg-ar} (with $T=0$, $\g{r}=0$), we have $\Ad(g_i^{-1})\g{h}=\R(B+W_i+y_iZ)$, $i\in\{1,2\}$.
Assume that there exists an isometry $\phi$ of the full isometry group of $\C H^n$ that maps $A\cdot g_1(o)$ to $A\cdot g_2(o)$.
Then, $\phi$ maps the limit points of one orbit to the limit points of the other.
Since these are $x$ and $-x$ by the discussion above, we conclude that $\phi$ leaves $\{x,-x\}\subset\C H^n(\infty)$ invariant.
In particular, the only geodesic of $\C H^n$ that has $\{x,-x\}$ as its limit set is $A\cdot o$.
Thus, $\phi$ maps $A\cdot o$~to~itself.

Hereafter $c$ denotes complex conjugation of projective coordinates of $\C H^n$ as a quotient of the pseudo-Hermitian flat space $\C^{1,n}\setminus\{0\}$.
Then, $c$ is an isometry of $\C H^n$ that is anti-holomorphic, but fixes $o$.
Considering the matrix expressions for~$\g{a}$, $\g{g}_\alpha$ and~$\g{g}_{2\alpha}$~\cite[\S3.1]{DKV}, it follows that $\Ad(c)(B)=B$, $\Ad(c)\g{g}_\alpha=\g{g}_\alpha$ and $\Ad(c)(Z)=-Z$.
In particular, $c$ fixes $x$.

There is an element $a\in A$ such that $\phi a(o)=o$. Hence, $k=\phi a$ maps $A\cdot o$ to itself, $A\cdot g_1(o)$ to $A\cdot g_2(o)$, and fixes $o$. Define $h=\sigma k$, where $\sigma$ is the identity transformation if $k(x)=x$, or the geodesic symmetry at $o$ if $k(x)=-x$. Then $h(x)=x$, which implies that $h\in \tilde{K}_0=K_0\sqcup cK_0$. Since $\sigma$ normalizes $A$, we have $h(A \cdot g_1(o))=\sigma k(A\cdot g_1(o))=\sigma (A\cdot g_2(o))=A\cdot \sigma(g_2(o))$. It is not difficult to check that there exists a unique $g\in N$ such that $g(o)\in A\cdot \sigma(g_2(o))$, and if $g=\Exp(2W+yZ)$, $W\in\g{g}_\alpha$, $y\in\R$, then $\|W\|=\|W_2\|$ and $|y|=|y_2|$.
As $\tilde{K}_0$ normalizes $AN$, we have $h_*\vert _{T_o\C H^n}\equiv\Ad(h)\vert_{\g{a}\oplus\g{n}}$.
Since $h(A\cdot g_1(o))=A\cdot g(o)$, and $\tilde{K}_0$ acts trivially on $\g{a}$ and leaves $\g{g}_\alpha$ and $\g{g}_{2\alpha}$ invariant, we have
\begin{align*}
\R(B+W+y Z)
&{}=\Ad(g^{-1})(\g{a})=\Ad(h)\Ad(g_1^{-1})(\g{a})\\
&{}=\Ad(h)(\R(B+W_1+y_1 Z))=\R(B+\Ad(h)W_1\pm y_1 Z).
\end{align*}
As $\tilde{K}_0$ acts transitively on the spheres of $\g{g}_\alpha$, we get $\lVert W_1\rVert=\|W\|=\lVert W_2\rVert$ and $| y_1|=|y|=|y_2|$.
This finishes the argument for $r=0$.

Now we show the converse.
The connected component of the identity of the normalizer of $\g{r}$ in $K_0$, which is given by $N_{K_0}^0(\g{r})\cong \SO(\dim\g{r})\times\U(n-1-\dim\g{r})$, acts transitively on the spheres of $\g{g}_\alpha\ominus\C\g{r}$ centered at the origin.
Thus, if $\lVert W_1\rVert=\lVert W_2\rVert$ and $y_1=y_2$, the orbits $H\cdot g_1(o)$ and $H\cdot g_2(o)$ are congruent.

We finally show that the congruence class does not depend on the sign of $y$.
We use the complex conjugation $c$ considered above.
We can find an element of $K_0$ that maps the totally real subspace $\g{r}$ to a subspace of $\g{g}_\alpha$ whose elements are real vectors; then we can assume $\Ad(c)\vert_\g{r}=\Id_{\g{r}}$.
Thus, supposing without loss of generality that $W\in\g{g}_\alpha\ominus\C\g{r}$ is real,  we have $\Ad(c)(B+W+yZ)=B+W-yZ$ and $\Ad(c)\g{r}=\g{r}$, as we wanted to show.

\subsection*{Case~(\ref{th:A:acrz})}

Let $\g{h}=\g{a}\oplus\g{c}\oplus\g{r}\oplus\g{g}_{2\alpha}$, with $\g{c}$ complex and $\g{r}$ totally real in $\g{g}_\alpha$.

If $H_{1}$ and $H_{2}$ are connected Lie subgroups of $G$ whose Lie algebras are $\g{h}_{i}=\g{a}\oplus\g{c}_{i}\oplus\g{r}_{i}\oplus\g{g}_{2\alpha}$, where $\g{c}_i$ is complex and $\g{r}_i$ is totally real in $\g{g}_\alpha$, $i\in\{1,2\}$,
then $\g{h}_1$ and $\g{h}_2$ are conjugate if and only if $\dim\g{c}_1=\dim\g{c}_2$ and $\dim\g{r}_1=\dim\g{r}_2$, because isometries of $\SU(1,n)$ are holomorphic.
Thus, from now on we fix $\g{c}$ and $\g{r}$.

Recall from Theorem~A that the CR $H$-orbits are obtained as $H\cdot g(o)$, with $g\in\Exp(J\g{r})$.

\begin{lemma}\label{lemma:shape:Adg-acrz}
The squared norm of the mean curvature vector of the orbit $H\cdot\Exp(JT)(o)$, $T\in\g{r}$, is given by
\[
\lVert\mean\rVert^2
=\frac{\lVert T\rVert^2\bigl(3+\dim(\g{c}\oplus\g{r})\bigr)^2}{4(4+\lVert T\rVert^2)}.
\]
\end{lemma}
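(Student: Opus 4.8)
The plan is to follow the same strategy used in Lemma~\ref{lemma:shape:Adg-r}: reduce to computing the mean curvature of $g^{-1}Hg\cdot o$ at the point $o$, where $g=\Exp(JT)$, using the explicit description of $\Ad(g^{-1})\g{h}$ obtained in~\eqref{eq:Adg-acrz} together with the Levi-Civita connection formula~\eqref{eq:Levi-Civita}. First I would note that $H\cdot g(o)$ is congruent to $g^{-1}Hg\cdot o$, so by homogeneity it suffices to compute $\mean$ at $o$. From~\eqref{eq:Adg-acrz} with $W=0$ we have $\Ad(g^{-1})\g{h}=\Ad(\Exp(-JT))\g{h}=\R(B+JT)\oplus\g{c}\oplus\g{r}\oplus\g{g}_{2\alpha}$ (the sign of $T$ is irrelevant for the norm of $\mean$). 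The corresponding normal space is then $\nu_o(g^{-1}Hg\cdot o)=\R(-JT+B)^{\perp}\cap\g{a}\text{-part}$; more precisely, using the computation in Case~(iv) of the proof of Theorem~A, the maximal complex subspace is $\g{m}=\R(B-JT)\oplus\R(Z+T)\oplus\g{c}$ and the normal space is $J\g{r}\ominus$ appropriate line, i.e. the orthogonal complement of $\R(B+JT)\oplus\g{c}\oplus\g{r}\oplus\g{g}_{2\alpha}$ inside $\g{a}\oplus\g{n}$, which is spanned by $\R(T-\lVert T\rVert^2 Z)$ together with $(\g{g}_\alpha\ominus(\g{c}\oplus\C\g{r}))$ and $J\g{r}\ominus\R(JT)$... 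I would write this normal space out cleanly once.

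Next I would choose a convenient orthonormal basis of the tangent space $\R(B+JT)\oplus\g{c}\oplus\g{r}\oplus\g{g}_{2\alpha}$ (when $T\neq 0$): a unit vector $X$ in the direction of a suitable combination of $B$, $JT$ and $Z$ that together with $\R(Z+T)$ spans the $\{B,JT,Z,T\}$-part, an orthonormal basis $\{U_i\}$ of $\g{c}$, and an orthonormal basis $\{S_j\}$ of $\g{r}$. Crucially, just as in Lemma~\ref{lemma:shape:Adg-r}, most of the second fundamental form terms vanish or contribute nothing to $\mean$: for $U\in\g{c}$ one has $\bar\nabla_U U=\tfrac12 B$ with $B$ having a nonzero normal component only if $B$ is not tangent, and one must be careful that here $B$ is \emph{not} tangent (only $B+JT$ is), so $\bar\nabla_U U$ does contribute. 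Similarly for $S\in\g{r}$, $\bar\nabla_S S=\tfrac12 B$ contributes its normal part, and the $Z$-directions contribute via~\eqref{eq:Levi-Civita}. Then $\mean=\sum\II(E_i,E_i)$ is a sum of $B$- and $JT$- and $Z$-type contributions; projecting onto the normal space and collecting terms yields $\mean$ as an explicit vector, whose squared norm gives the claimed formula. The appearance of $4+\lVert T\rVert^2$ rather than $1+\lVert T\rVert^2$ (as in case~(i)) comes from the factor $2\langle Z,Z\rangle=2$ in the metric, reflecting that $Z\in\g{g}_{2\alpha}$ has squared norm $1$ while $B_\theta(Z,Z)=2$, so that the unit vector in the $\R(Z+T)$ direction carries a different normalization; I expect the denominator $4(4+\lVert T\rVert^2)$ to emerge naturally from this.

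The main obstacle will be bookkeeping: getting the normalization of the unit vector $X$ spanning the relevant part of the tangent space right (since $\langle Z,Z\rangle=1$ but $\langle B,B\rangle=1$ and these combine in $B+JT$ and $Z+T$ in a way that is easy to mishandle), and correctly projecting $\bar\nabla_{U_i}U_i$, $\bar\nabla_{S_j}S_j$, $\bar\nabla_X X$, and the $\g{g}_{2\alpha}$-contributions onto the normal space. I would organize the computation by writing, for a unit $U\in\g{c}$, $\II(U,U)=(\bar\nabla_U U)^\perp=(\tfrac12 B)^\perp$; for a unit $S\in\g{r}$, $\II(S,S)=(\tfrac12 B)^\perp$; and $\II(X,X)=(\bar\nabla_X X)^\perp$, then sum with multiplicities $\dim\g{c}$ and $\dim\g{r}$ and add the single $X$-term. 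The vanishing of cross terms and of the $\g{g}_{2\alpha}$-self-terms in $\mean$ (the latter because the relevant $\bar\nabla$ lands tangentially or in a direction that cancels) should parallel the situation in Lemma~\ref{lemma:shape:Adg-r}, and once the normal component of $B$ is identified — which, given $\g{m}=\R(B-JT)\oplus\R(Z+T)\oplus\g{c}$, is $B^\perp = B - \tfrac{1}{1+\lVert T\rVert^2}(B-JT)\cdot(\text{coeff}) - \cdots$, combined with the $Z+T$ direction — the formula drops out after grouping. The final step is just taking the squared norm of the resulting multiple of a single normal vector (I anticipate $\mean$ being proportional to a fixed unit normal vector of the form $aB+bJT+c(T-\lVert T\rVert^2 Z)$), giving $\lVert\mean\rVert^2 = \dfrac{\lVert T\rVert^2(3+\dim(\g{c}\oplus\g{r}))^2}{4(4+\lVert T\rVert^2)}$ after simplification.
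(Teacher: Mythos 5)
Your overall strategy (a direct computation of $\mean$ from \eqref{eq:Levi-Civita}, in the style of Lemma~\ref{lemma:shape:Adg-r}) is viable and is genuinely different from the paper, which simply quotes the mean curvature vector from \cite[Lemma~6.1]{DDK} and takes its norm. However, as written your plan contains errors that would lead to the wrong answer. First, the tangent space is misidentified: since \eqref{eq:Adg-acrz} is computed for $g=\Exp(2JT+2W)$, for the orbit through $\Exp(JT)(o)$ one gets $\Ad(\Exp(-JT))\g{h}=\R(2B+JT)\oplus\g{c}\oplus\g{r}\oplus\g{g}_{2\alpha}$, not $\R(B+JT)\oplus\cdots$; the factor $2$ comes from $[B,U]=\tfrac12 U$ on $\g{g}_\alpha$, and it is exactly this that produces the $4+\lVert T\rVert^2$ in the denominator. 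Your attribution of the $4$ to the normalization of $Z$ is a red herring: $\langle Z,Z\rangle=1$ in the metric actually used, so $Z$ is already a unit vector.

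Second, your normal space is wrong: $T-\lVert T\rVert^2 Z$ lies in $\g{r}\oplus\g{g}_{2\alpha}$, both of which are \emph{tangent} here; you have confused the normal space with the totally real complement of the maximal complex subspace inside the tangent space (from the proof of Theorem~A(iv)). The correct normal space is $\R(\lVert T\rVert^2 B-2JT)\oplus(J\g{r}\ominus\R JT)\oplus\bigl(\g{g}_\alpha\ominus(\g{c}\oplus\C\g{r})\bigr)$, and $\mean$ must be proportional to $\lVert T\rVert^2B-2JT$, so your anticipated form $aB+bJT+c(T-\lVert T\rVert^2Z)$ cannot occur. Third, the $\g{g}_{2\alpha}$ self-term does \emph{not} vanish: $\bar\nabla_ZZ=B$, whose normal component is $\tfrac{1}{4+\lVert T\rVert^2}(\lVert T\rVert^2B-2JT)\neq0$ for $T\neq0$, and it contributes the summand $1$ in the coefficient $\tfrac{m}{2}+1+\tfrac12=\tfrac{m+3}{2}$ (with $m=\dim(\g{c}\oplus\g{r})$), the $\tfrac{m}{2}$ coming from $\g{c}\oplus\g{r}$ and the $\tfrac12$ from the unit vector $X=(2B+JT)/\sqrt{4+\lVert T\rVert^2}$. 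With these corrections one indeed obtains $\mean=(3+m)(\lVert T\rVert^2B-2JT)/(2(4+\lVert T\rVert^2))$ and hence the stated formula; without them the computation does not close.
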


\begin{proof}
By virtue of~\eqref{eq:Adg-acrz} with $W=0$ and~\cite[Lemma 6.1]{DDK}, the mean curvature of $H\cdot g(o)$~reads $\mean=
(3+\dim(\g{c}\oplus\g{r})) (\lVert T\rVert^2 B-2JT)/(2(4+\lVert T\rVert^{2}))$.
The formula in the statement follows after calculating the squared norm of this vector.
\end{proof}

Let $T_{1}$, $T_{2}\in\g{r}$, and define $g_{1}=\Exp(JT_{1})$, $g_{2}=\Exp(JT_{2})$.
We determine when the orbits $H\cdot g_1(o)$ and $H\cdot g_2(o)$ are congruent.

If $\lVert T_{1}\rVert=\lVert T_{2}\rVert$, then we show that $H\cdot g_1(o)$ and $H\cdot g_2(o)$ are congruent.
Recall from~\eqref{eq:Adg-acrz} (with $W=0$) that $\Ad(g_i^{-1})\g{h}=\R(2B+JT_i)\oplus\g{c}\oplus\g{r}\oplus\g{g}_{2\alpha}$.
The normalizer of $\g{r}$ on $K_{0}$,
$N_{K_{0}}^0(\g{r})\cong \SO(\dim\g{r})\times \U(n-1-\dim\g{r})$,
acts transitively on the spheres of $\g{r}$, and thus, there exists $k\in N_{K_0}^0(\g{r})$ such that $\Ad(k)JT_1=JT_2$ and $\Ad(k)\g{c}=\g{c}$.
Then, $\Ad(k)\Ad(g_1^{-1})\g{h}=\Ad(g_2^{-1})\g{h}$, and the orbits $H\cdot g_1(o)$ and $H\cdot g_2(o)$ are congruent.

Conversely, the function $h\colon[0,\infty)\to[0,\infty)$, $t\mapsto at/(4+t)$, $a>0$, satisfies $h'(t)=4a/(4+t)^2>0$. Hence, $h$ is injective, and Lemma~\ref{lemma:shape:Adg-acrz} implies that the orbits $H\cdot g_{1}(o)$ and $H\cdot g_{2}(o)$ are congruent if and only if $\lVert T_{1}\rVert=\lVert T_{2}\rVert$.

\subsection*{Non-congruence of the different types}\hfill

We finally study the congruence among the four different types of orbits listed in Theorem~A.
In order to do so, we first note that orbits of Type~(\ref{th:A:r}) and Type~(\ref{th:A:crz}) are contained in horospheres, while none of the orbits of Type~(\ref{th:A:ar}) or Type~(\ref{th:A:acrz}) satisfy this property. Considering this fact, it follows that none of the orbits of Type~(\ref{th:A:r}) or~(\ref{th:A:crz}) is congruent to any orbit of Type~(\ref{th:A:ar}) or~(\ref{th:A:acrz}).

On the other hand, every Type~(\ref{th:A:ar}) orbit is a totally real submanifold, while any orbit of Type~(\ref{th:A:acrz}) has non-trivial holomorphic part. Thus, none of the orbits of Type~(\ref{th:A:ar}) is congruent to any Type~(\ref{th:A:acrz}) orbit.

It only remains to analyze the congruence between orbits of Types (i) and (ii). Let us denote by $H_{i}$ the connected Lie subgroup of $G$ with Lie algebra $\g{h}_{i}$, $i\in\{1,2\}$, with $\g{h}_{1}=(\g{r}_{1}\ominus\R T)\oplus\R(T-\lVert T\rVert^2 Z)$, $T\in \g{r}_{1}$, and $\g{h}_{2}=\g{c}_{2}\oplus\g{r}_{2}\oplus\g{g}_{2\alpha}$.
As usual $\g{r}_{i}$ denotes a totally real subspace of $\g{g}_{\alpha}$ for each $i\in\{1,2\}$, and $\g{c}_{2}\subset\g{g}_{\alpha}$ denotes a complex one.

Suppose that an $H_1$-orbit is congruent to an $H_2$-orbit. Since $\g{h}_1$ is totally real, we must have $\g{c}_{2}=0$.
In this case we also have $r=\dim\g{r}_1=\dim\g{r}_2+1\geq 1$.
Moreover, we must have $\lVert\mean_1\rVert^2=\lVert\mean_2\rVert^2$, which according to Lemmas~\ref{lemma:shape:Adg-r} and~\ref{lemma:shape:Adg-crz}, implies
\[
\frac{(1+r)^2}{4}=\frac{4\lVert T\rVert^2+(r+(r+1)\lVert T\rVert^2)^2}{4(1+\lVert T\rVert^2)^2},
\]
or equivalently, $3+2(r-1)(1+\lVert T\rVert^2)=0$.
Since this never happens, none of the orbits of $H_1$ is congruent to any orbit of $H_2$.


\end{document}